\documentclass[11pt,reqno]{article}
\usepackage{amssymb}
\usepackage{amsthm}
\usepackage{amsmath}
\usepackage{mathrsfs,verbatim,graphicx}
\usepackage{fullpage,picins,paralist}
\usepackage{hyperref}
\usepackage[latin1]{inputenc}

\newcommand\remove[1]{}

\renewcommand{\le}{\leqslant}
\renewcommand{\ge}{\geqslant}
\renewcommand{\leq}{\leqslant}
\renewcommand{\geq}{\geqslant}

\newcommand{\1}{\mathbf{1}}

\newcommand{\e}{\varepsilon}
\newcommand{\R}{\mathbb{R}}

\newcommand{\E}{\mathbb{E}}
\newcommand{\U}{\mathcal{U}}

\newcommand{\N}{\mathbb{N}}

\newcommand{\C}{\mathbb{C}}
\newcommand{\f}{\varphi}

\newtheorem{theorem}{Theorem}[section]

\newtheorem{lemma}[theorem]{Lemma}

\newtheorem{claim}[theorem]{Claim}

\newtheorem{corollary}[theorem]{Corollary}

\theoremstyle{theorem}

\newcommand{\eqdef}{\stackrel{\mathrm{def}}{=}}

\renewcommand{\H}{\mathbb{H}}

\renewcommand{\setminus}{\smallsetminus}


\begin{document}

\title{Sharp quantitative nonembeddability of the Heisenberg group\\ into superreflexive Banach spaces}

\author{
Tim Austin\thanks{Partially supported by a fellowship from Microsoft Corporation.}\\Brown
\and
Assaf Naor\thanks{Supported by NSF grants CCF-0635078 and CCF-0832795, BSF
grant 2006009, and the Packard Foundation.}\\ NYU
\and Romain Tessera\\E.N.S. Lyon
}
\date{}
\maketitle

\begin{abstract}
Let $\H$ denote the discrete Heisenberg group, equipped with a word metric $d_W$ associated to some finite symmetric generating set. We show that if $(X,\|\cdot\|)$ is a $p$-convex Banach space then for any Lipschitz function $f:\H\to X$ there exist $x,y\in \H$ with $d_W(x,y)$ arbitrarily large and
\begin{equation}\label{eq:comp abs}
\frac{\|f(x)-f(y)\|}{d_W(x,y)}\lesssim \left(\frac{\log\log d_W(x,y)}{\log d_W(x,y)}\right)^{1/p}.
\end{equation}
 We also show that any embedding into $X$ of a ball of radius $R\ge 4$ in $\H$ incurs bi-Lipschitz distortion that grows at least as a constant multiple of
\begin{equation}\label{eq:dist abs}
\left(\frac{\log R}{\log\log R}\right)^{1/p}.
\end{equation}
 Both~\eqref{eq:comp abs} and~\eqref{eq:dist abs} are sharp up to the iterated logarithm terms. When $X$ is
Hilbert space we obtain a representation-theoretic proof yielding
bounds  corresponding to~\eqref{eq:comp abs} and~\eqref{eq:dist abs}
which are sharp up to a universal constant.
\end{abstract}


\section{Introduction}

Let $\H\eqdef \left.\left\langle a,b\right| aba^{-1}b^{-1}\ \mathrm{is\ central}\right\rangle$ denote the discrete Heisenberg group, with canonical generators $a,b\in \H$. We let $d_W(\cdot,\cdot)$ denote the left-invariant word metric on $\H$ associated to the symmetric generating set $S\eqdef \{a,b,a^{-1},b^{-1}\}$.

A Banach space $(X,\|\cdot\|_X)$ is {\em superreflexive} if it admits an equivalent uniformly convex norm, i.e., a norm $\|\cdot\|$ satisfying $\alpha\|x\|_X\le \|x\|\le\beta\|x\|_X$ for some $\alpha,\beta>0$ and all $x\in X$, such that for all $\e\in (0,1)$ there exists $\delta>0$ for which we have
\begin{equation}\label{eq:def UC}
\|x\|=\|y\|=1\ \wedge\ \|x-y\|=\e\implies \|x+y\|\le 2-\delta.
\end{equation}

Here we prove the following result:
\begin{theorem}\label{thm:some power}
Let $(X,\|\cdot\|_X)$ be a superreflexive Banach space. Then there exist $c,C>0$ such that for every $f:\H\to X$ which is $1$-Lipschitz with respect to the metric $d_W$, there are $x,y\in X$ with $d_W(x,y)$ arbitrarily large and
$$
\frac{\|f(x)-f(y)\|_X}{d_W(x,y)}\le \frac{C}{\left(\log d_W(x,y)\right)^c}\enspace .
$$
\end{theorem}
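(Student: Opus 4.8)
The plan is to establish the qualitative statement of Theorem~\ref{thm:some power} by reducing it to a quantitative obstruction coming from a Poincar\'e-type inequality for the Heisenberg group, and then bootstrapping from the case of Hilbert space via Pisier's martingale-cotype characterization of superreflexivity. First I would recall that, since $(X,\|\cdot\|_X)$ is superreflexive, by Pisier's renorming theorem there exists $p\in[2,\infty)$ and an equivalent norm $\|\cdot\|$ on $X$ with modulus of uniform convexity of power type $p$; equivalently, $X$ has martingale cotype $p$, so that for every finite-length $X$-valued martingale $(M_k)_{k=0}^n$ one has $\sum_{k=1}^n \E\|M_k-M_{k-1}\|^p \lesssim_X \E\|M_n-M_0\|^p$. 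Throughout I will work with this renormed $X$ and prove the estimate with $c = 1/p$ (up to the iterated-log loss which the qualitative statement discards).

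The heart of the argument is a vertical-versus-horizontal Poincar\'e inequality on $\H$: for the metric $d_W$ there is a scale parameter $t$ (the "vertical" displacement is $t^2$ in the $\H$-metric, by the ball-box estimate $d_W(g, c^{t^2}) \asymp t$ where $c=aba^{-1}b^{-1}$) such that for any $f:\H\to X$,
\[
\frac{1}{|B_R|}\sum_{g\in B_R} \frac{\|f(gc^{t^2})-f(g)\|^p}{t^p} \;\lesssim_X\; \frac{\log(R/t)}{|B_{2R}|}\sum_{g\in B_{2R}} \sum_{s\in S}\|f(gs)-f(g)\|^p.
\]
One proves this by a discrete "pyramid" or telescoping argument: express the central element $c^{t^2}$ as a product of commutators $[a^{\pm 1},b^{\pm 1}]$ arranged along $\log t$ scales, write the corresponding difference of $f$-values as a telescoping sum along a path, and then apply the martingale-cotype-$p$ inequality to the associated martingale difference sequence; the factor $\log(R/t)$ is exactly the number of dyadic scales, and this is where superreflexivity (via martingale cotype, not merely a crude triangle inequality which would give $(\log)^{p}$ with the wrong exponent) is used to avoid losing a power. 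On the other side, the left-hand "vertical derivative" cannot be too small for a genuinely bi-Lipschitz-type reason: if $f$ were $1$-Lipschitz and also satisfied $\|f(gc^{t^2})-f(g)\| \gtrsim t^2$ for a positive-density set of $g$ at every scale, one would contradict the Poincar\'e inequality by choosing $t = R^{\theta}$; conversely, the contrapositive is what yields the compression bound. Concretely, assume for contradiction that $f$ is $1$-Lipschitz but $\|f(x)-f(y)\|/d_W(x,y) \geq C(\log d_W(x,y))^{-c}$ for all sufficiently separated $x,y$; apply this with $y = xc^{t^2}$, $d_W(x,y)\asymp t$, to get $\|f(xc^{t^2})-f(x)\| \gtrsim t/(\log t)^{c}$, plug into the left side of the Poincar\'e inequality, use $1$-Lipschitzness to bound the right side by $O(\log(R/t))$, and optimize over $t$ in terms of $R$ to reach a contradiction once $R$ is large.

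The main obstacle I anticipate is proving the Poincar\'e inequality with the sharp dependence on $p$ in the number of scales: the naive telescoping of $f$ along a path realizing $c^{t^2}$ as a product of $\asymp t^2$ generators would only give a bound with $t^p$ losses, and the naive decomposition into $\log t$ scales combined with the triangle inequality in $L^p$ would give an extra $(\log t)^{p-1}$ factor. Avoiding this requires genuinely organizing the path into a martingale so that Pisier's inequality can be invoked, which is the technical crux — one must choose a filtration on (a probability space modeling) $B_R$ along which the successive "scale-$2^j$" contributions to $f(gc^{t^2})-f(g)$ form (approximately) a martingale difference sequence, so that the $p$-th powers add rather than the $1$st powers. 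A secondary obstacle is handling boundary effects when passing from $B_R$ to $B_{2R}$ (doubling of the word metric on $\H$ makes this routine) and making the "arbitrarily large $d_W(x,y)$" conclusion precise: since the contradiction is reached for every large $R$, one gets pairs $x,y$ with $d_W(x,y)\to\infty$ violating any putative lower bound $C(\log d_W(x,y))^{-c}$ with the implicit constant from the Poincar\'e inequality, which is exactly the statement. For the Hilbert-space refinement mentioned in the abstract one would instead use the representation-theoretic (Fourier-analytic on the dual of $\H$) computation of the vertical Poincar\'e inequality, which removes the iterated-log loss, but for Theorem~\ref{thm:some power} as stated the martingale-cotype route suffices.
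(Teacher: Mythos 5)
Your plan hinges entirely on a vertical-versus-horizontal Poincar\'e inequality for $X$-valued functions on $\H$ with a superreflexive (martingale cotype $p$) target, and that inequality is asserted rather than proved: the step you yourself flag as the crux --- choosing a filtration so that the scale-$2^j$ contributions to $f(gc^{t^2})-f(g)$ form a martingale difference sequence to which Pisier's cotype inequality applies --- is precisely the missing mathematics, and it is not a routine adaptation of a telescoping argument. Indeed, the paper never proves such an inequality for general $p$-convex targets; it proves one only for Hilbert-space targets (Theorem~\ref{thm:ingtro discrete}), via the representation theory of $\H(\R)$, and for general $p$-convex $X$ it takes a completely different route: the reduction of~\cite[Thm.~9.1]{NP08} replaces the arbitrary Lipschitz map by a $1$-cocycle $f\in Z^1(\pi)$, and then the averaging projections $P_n=\frac{1}{2^n}\sum_{j<2^n}\pi(c)^j$ along the center, the uniform convexity lemma for ergodic averages (Lemma~\ref{lem:cauchy}), and Lemmas~\ref{lem:ab}--\ref{lem:I-P} yield sublinear growth of $f(c^{n^2})$ at some scale $n\in[t,t^2]$. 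The vertical Poincar\'e inequality you want is true, but its known proofs for uniformly convex targets require substantially heavier analytic machinery (vector-valued Littlewood--Paley--Stein theory for semigroups on $\H$, which is where martingale cotype actually enters), so treating it as an accessible lemma leaves a genuine gap at the heart of your argument.

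There is also a quantitative problem with the way you propose to close the argument even if the inequality were granted. In the single-scale form you state, with a multiplicative $\log(R/t)$ on the right-hand side, the contradiction does not materialize: assuming $\|f(xc^{t^2})-f(x)\|\gtrsim t/(\log t)^{c}$ makes your left-hand side of order $(\log t)^{-cp}\le 1$, while the right-hand side is of order $\log(R/t)$ for a $1$-Lipschitz $f$, so the inequality is comfortably satisfied for every choice of $t$ and no optimization helps; a genuinely bi-Lipschitz $f$ would satisfy it as well. What one needs is the summed-over-scales form, roughly $\sum_{t\le R^2} t^{-1-p/2}\,\E\|f(xc^{t})-f(x)\|^p\lesssim \E\sum_{s\in S}\|f(xs)-f(x)\|^p$ with no logarithmic factor on the right (this is the shape of~\eqref{eq:poin intro} in the Hilbert case); then the assumed compression lower bound forces the divergent series $\sum_t 1/(t(\log t)^{cp})$ on the left when $cp\le 1$, which is the actual contradiction. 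You neither state nor indicate a proof of that stronger form, so both the key lemma and the deduction from it need to be repaired.
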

The fact that $\H$ does not admit a bi-Lipschitz embedding into any superreflexive Banach space was proved in~\cite{LN06,GFDA}. These proofs use  an argument of Semmes~\cite{Sem96}, based on a natural extension of Pansu's differentiability theorem~\cite{Pan89}.

A natural way to quantify the extent to which $\H$ does not admit a
bi-Lipschitz embedding into $(X,\|\cdot\|_X)$ is via Gromov's
notion~\cite[Sec. 7.3]{Gro93} of {\em compression rate}, defined for
a Lipschitz function $f:\H\to X$ as the largest
function $\omega_f:(0,\infty)\to [0,\infty)$ such that for all
$x,y\in \H$ we have $\|f(x)-f(y)\|_X\ge
\omega_f\left(d_W(x,y)\right)$. The fact that $\H$ does not admit a
bi-Lipschitz embedding into a superreflexive Banach space $X$ means
that $\liminf_{t\to\infty} \omega_f(t)/t=0$ for all Lipschitz
functions $f:\H\to X$. The differentiability-based proof of this
nonembeddability result involves a limiting argument that does not
give information on the rate at which $\omega_f(t)/t$ vanishes.
Theorem~\ref{thm:some power} supplies such information, via an
approach which is different from the arguments in~\cite{LN06,GFDA}.


Cheeger and Kleiner proved~\cite{ckbv} that $\H$ does not admit a
bi-Lipschitz embedding into $L_1$. In~\cite{ckn} it was shown that
there exists $c>0$ such that for any Lipschitz function $f:\H\to
L_1$ we have $\omega_f(t)/t\le 1/(\log t)^c$ for arbitrarily large
$t$. This result covers Theorem~\ref{thm:some power} when the
superreflexive Banach space $X$ admits a bi-Lipschitz embedding into
$L_1$: such spaces include $L_p$ for $p\in (1,2]$.
Theorem~\ref{thm:some power} is new even for spaces such as $L_p$
for $p \in (2,\infty)$, which do not admit a bi-Lipschitz embedding into
$L_1$ (see~\cite{BL}). Moreover, our method yields sharp results,
while the constant $c$ obtained in~\cite{ckn} is far from sharp.

In order to state our sharp version of Theorem~\ref{thm:some power},
we recall the following important theorem of Pisier~\cite[Thm.
3.1]{Pisier-martingales}: if $X$ is superreflexive then it admits an
equivalent norm $\|\cdot\|$ for which there exist $p\ge 2$ and
$K>0$ satisfying the following improvement of~\eqref{eq:def UC}:
\begin{equation}\label{eq:p-convexity}
\forall x,y\in X,\quad \left\|\frac{x+y}{2}\right\|^p\le \frac{\|x\|^p+\|y\|^p}{2}-\frac{1}{K^p}\left\|\frac{x-y}{2}\right\|^p.
\end{equation}
A Banach space admitting an equivalent norm
satisfying~\eqref{eq:p-convexity} is said to be $p$-convex. If
$(X,\|\cdot\|)$ satisfies~\eqref{eq:p-convexity} then the infimum
over those $K>0$ satisfying~\eqref{eq:p-convexity} is denoted
$K_p(X)$. For concreteness, when $p\in (1,2]$ we have $K_2(L_p)\le 1/\sqrt{p-1}$  and for $p\ge 2$ we have $K_p(L_p)\le 1$ (see~\cite{BCL}).

The following theorem is a refinement of Theorem~\ref{thm:some
power}.

\begin{theorem}\label{thm:main} Assume that the Banach space $(X,\|\cdot\|)$ satisfies~\eqref{eq:p-convexity}.
Let $f:\H\to X$ be a $1$-Lipschitz function. Then for every $t\ge 3$
there exists an integer $t\le n\le t^2$ such
that\footnote{In~\eqref{eq:asymp}, and in the rest of this paper,
the notation $\lesssim,\gtrsim$ denotes the corresponding
inequalities up to a universal multiplicative factor. The notation
$A\asymp B$ stands for $A\lesssim B\wedge B\lesssim A$.}
\begin{equation}\label{eq:asymp}
\frac{\omega_f(n)}{n}\lesssim K_p(X)\left(\frac{\log\log n}{\log
n}\right)^{1/p}.
\end{equation}
\end{theorem}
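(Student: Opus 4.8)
The plan is to deduce Theorem~\ref{thm:main} from a single averaged Poincaré-type inequality on balls of $\H$, exploiting the quadratic distortion of the center. Write $c\eqdef aba^{-1}b^{-1}$ for the generator of $Z(\H)\cong\Z$; the defining relation gives $d_W(e,c^m)\asymp\sqrt m$. Since $f$ is $1$-Lipschitz we trivially have $\|f(x)-f(xc^m)\|\lesssim\sqrt m$ for all $x\in\H$, so everything hinges on an \emph{averaged} improvement of this bound by a factor of $(\log m)^{1/p}$. Concretely, the target is: for every $1$-Lipschitz $f\colon\H\to X$ with $X$ satisfying~\eqref{eq:p-convexity}, and every $R\asymp\sqrt m$,
\begin{equation*}
\frac{1}{|B_R|}\sum_{x\in B_R}\bigl\|f(x)-f(xc^m)\bigr\|^p\ \lesssim\ K_p(X)^p\,m^{p/2}\,\frac{\log\log m}{\log m}.
\end{equation*}

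To prove this inequality I would realize the vertical displacement $c^m$ through a self-similar ``grid'' decomposition across $N\asymp\log m$ dyadic scales, using the commutator identity $c^{k\ell}=a^kb^\ell a^{-k}b^{-\ell}$: an $L\times L$ square (with $L\asymp\sqrt m$) splits into four $(L/2)\times(L/2)$ subsquares, each of these into four more, and so on down to elementary $1\times1$ commutators. The key point is that moving from one corner of a (sub)grid to the opposite corner can be routed in two genuinely different ways (``right then down'' versus ``down then right'' in the $2\times2$ array of subgrids), and these two routes differ by a strictly lower-order vertical displacement --- precisely the self-similar structure of a Laakso/diamond graph, with the Heisenberg commutator relation playing the role of the diamond substitution. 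One then applies, recursively over the $N$ scales, the ``four-cycle cotype'' inequality that follows from $p$-convexity~\eqref{eq:p-convexity} (relating the $p$-th powers of the two diagonals of a quadrilateral to those of its four sides): whereas the trivial convexity estimate would lose a factor $N^{p-1}$, strict $p$-convexity makes the accounting telescope and contributes the crucial gain $\asymp 1/N\asymp 1/\log m$ (with a $\log\log m$ inefficiency reflecting that not all $\log m$ scales can be exploited at full strength). Averaging over $x\in B_R$, bounding the horizontal increments that appear on the right-hand side of the recursion by the Lipschitz constant, and using the doubling property of $\H$ to pass between $|B_R|$ and the measures of mildly larger balls, then yields the displayed inequality.

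Given the inequality, Theorem~\ref{thm:main} follows by contradiction. Fix $t\ge3$ and suppose that $\omega_f(n)\ge\e\,n$ for every integer $n\in[t,t^2]$, where $\e=A\,K_p(X)\bigl(\log\log t/\log t\bigr)^{1/p}$ for a large absolute constant $A$. For $m\in[t^2,t^4]$ we have $d_W(e,c^m)\asymp\sqrt m$ with $\sqrt m$ of order an element of $[t,t^2]$, so $\|f(x)-f(xc^m)\|\ge\omega_f\bigl(d_W(e,c^m)\bigr)\gtrsim\e\sqrt m$ for all $x$; hence the left-hand side of the displayed inequality is $\gtrsim\e^p m^{p/2}$. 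Comparing the two sides gives $\e^p\lesssim K_p(X)^p\log\log m/\log m\lesssim K_p(X)^p\log\log t/\log t$, contradicting the choice of $\e$ once $A$ is large enough. Therefore some integer $n\in[t,t^2]$ satisfies $\omega_f(n)/n\lesssim K_p(X)\bigl(\log\log n/\log n\bigr)^{1/p}$, which is~\eqref{eq:asymp}. (Via Pisier's theorem this in particular yields Theorem~\ref{thm:some power}.)

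The main obstacle is the displayed Poincaré inequality, and within it the point that $p$-convexity genuinely bites. Inequality~\eqref{eq:p-convexity} only produces a gain at an honest midpoint of two points, so the grid/commutator decomposition must be arranged so that at every scale the current Cayley-graph vertex is, combinatorially, the midpoint of the two order-swapped detours; one must then verify that iterating the resulting four-cycle cotype inequality down all $N$ scales telescopes to a true $1/\log m$ factor rather than something weaker --- this is where the diamond-graph analogy has to be made precise. Getting the commutator identities to ``close up'' consistently across scales, controlling the combinatorial length (hence the Lipschitz cost) of the horizontal moves that proliferate at the finer scales, keeping every intermediate vertex inside a fixed dilate of $B_R$, and pinning down the exact origin of the $\log\log$ loss, are the technically demanding parts of the argument.
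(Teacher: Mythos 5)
Your high-level strategy is genuinely different from the paper's, but it contains a gap that is not a technicality: the entire analytic content of the theorem is concentrated in the averaged Poincar\'e inequality you display, and your proposed proof of it --- iterating a four-cycle ("short diagonals") consequence of~\eqref{eq:p-convexity} down a self-similar grid of $N\asymp\log m$ scales and claiming the errors telescope to a $1/\log m$ gain --- is asserted rather than carried out, and it is far from clear that it can be carried out as described. The difficulty is that the Heisenberg commutator square is not a diamond graph: for the quadrilateral with corners $e$, $a^k$, $b^\ell$ and the two order-swapped products $a^kb^\ell$, $b^\ell a^k$, the "vertical" separation $d_W(a^kb^\ell,b^\ell a^k)=d_W(e,c^{k\ell})\asymp\sqrt{k\ell}$ is of the \emph{same order} as the side lengths when $k\asymp\ell$, so a single application of the quadrilateral inequality at any one scale yields no gain at all; moreover neither order-swapped product is a metric midpoint of a pair of points to which~\eqref{eq:p-convexity} directly applies. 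Extracting a cumulative $1/\log m$ gain for an \emph{arbitrary} Lipschitz $f$ from this picture is essentially a quantitative differentiation statement, which is precisely the difficulty the paper says it is designed to circumvent; your closing paragraph correctly identifies this as "the technically demanding part," but that part is the theorem. (Your final contradiction argument deducing~\eqref{eq:asymp} from the displayed inequality is fine up to constants; inequalities of the type you target were later proved for $L_q$ by Fourier-analytic methods, not by a combinatorial diamond iteration.)

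For contrast, the paper avoids arbitrary Lipschitz maps altogether: by~\cite[Thm.~9.1]{NP08} one may assume $f$ is a $1$-cocycle of an isometric action $\pi$. The multi-scale structure then lives not in the Cayley graph but in the ergodic averages $P_n=2^{-n}\sum_{j<2^n}\pi(c)^j$ along the center: a uniform convexity lemma applied to the single vector $(f(a),f(b))$ produces, among $m$ consecutive dyadic blocks of scales, one scale $i$ at which $P_{(i+1)\ell}f(c^{n^2})$ and $P_{i\ell}f(c^{n^2})$ differ by at most $O(Kn/m^{1/p})$; separate arguments show that $P_{i\ell}f(c^{n^2})$ is small because $c^{n^2}$ has word length only $4n$ (a F\o lner/flow estimate) and that $(I-P_{(i+1)\ell})f(c^{n^2})$ is small because that cocycle is close to a coboundary. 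The number of usable scales is what produces the $(\log\log n/\log n)^{1/p}$ rate. If you want to pursue your route, the statement you would need to prove is a vertical-versus-horizontal Poincar\'e inequality for $p$-convex targets, and you should expect to need genuinely new input (representation-theoretic or Littlewood--Paley-type) rather than the four-cycle inequality alone.
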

The estimate~\eqref{eq:asymp} is sharp up to the iterated logarithm
term. Indeed, $L_p$ is $p$-convex when $p\in [2,\infty)$, and
in~\cite{Tess,Tess08} it was shown that there exists $f:\H\to L_p$
satisfying
$$
\frac{\omega_f(n)}{n}\gtrsim \frac{1}{(\log n)^{1/p}\log\log n}
$$
for all $n\ge 3$ (we refer to~\cite{Tess08} for a more refined
result of this type).

Our proof of Theorem~\ref{thm:main} circumvents the difficulties
involved with proving quantitative variants of differentiability
results by avoiding the need to reason about arbitrary Lipschitz
mappings. Instead, we start by using a simple result
from~\cite{NP08} which reduces the problem to equivariant mappings.
Specifically, in~\cite[Thm. 9.1]{NP08} it is shown that if $X$
satisfies~\eqref{eq:p-convexity} and $f:\H\to X$ is $1$-Lipschitz,
then there exists a Banach space $Y$ that also
satisfies~\eqref{eq:p-convexity}, with $K_p(Y)=K_p(X)$ (in fact, $Y$
is finitely representable in $\ell_p(X)$), an action $\pi$ of $\H$
on $Y$ by linear isometric automorphisms, and a $1$-cocycle $F:\H\to
Y$ (i.e., $F(xy)=\pi(x)F(y)+F(x)$ for all $x,y\in \H$) with
$\omega_F=\omega_f$. Thus, in proving Theorem~\ref{thm:main} it
suffices to assume that $f$ itself is a $1$-cocycle. We note that if
$X$ is Hilbert space then $Y$ is also Hilbert space; this is an
older result of Gromov (see~\cite{CTV07}). More generally, when
$X=L_p$ then it is shown in~\cite{NP08} that we can take $Y=L_p$.

Having reduced the problem to $1$-cocycles, our starting point is a
(non-quantitative) proof, explained in Section~\ref{sec:non quant},
showing that if $X$ is an ergodic Banach space, then for every
$1$-cocycle $f:\H\to X$ we have $\liminf_{t\to\infty}
\omega_f(t)/t=0$. It turns out that the ideas of this proof, which
crucially use the fact that $f$ is a $1$-cocycle, can be
(nontrivially) adapted to yield Theorem~\ref{thm:main}.

Recall that $X$ is ergodic if for every linear isometry $T:X\to X$ and every
$x\in X$ the sequence $\left\{\frac{1}{n}\sum_{j=0}^{n-1}
T^jx\right\}_{n=1}^\infty$ converges in norm. Reflexive spaces, and
hence also  superreflexive spaces, are ergodic (see~\cite[p.
662]{DS58}). If a Banach space $X$ has the property
that all Banach spaces that are finitely representable in $X$ are
ergodic, then $X$ must be superreflexive~\cite{BS72}. Thus, when
using the reduction to $1$-cocyles based on the result
of~\cite{NP08}, the class of Banach spaces to which it naturally
applies is the class of superreflexive spaces.

\subsection{Bi-Lipschitz distortion of balls}\label{sec:balls}

For $R\ge 1$ let $B_R=\{x\in \H:\ d_W(e,x)\le R\}$ denote the ball
of radius $R$ centered at the identity element $e\in \H$. The
bi-Lipschitz distortion of $(B_R,d_W)$ in $(X,\|\cdot\|)$, denoted
$c_X(B_R)$, is the infimum over those $D\ge 1$ such that there
exists $f:B_R\to X$ satisfying \begin{equation}\label{eq:def
distortion} \forall\  x,y\in B_R,\quad d_W(x,y)\le \|f(x)-f(y)\|\le
Dd_W(x,y).
\end{equation}

 Another way to measure the extent to which $\H$ does not admit a
bi-Lipschitz embedding into $X$ is via the rate at which $c_X(B_R)$ grows to
$\infty$ with $R$. In~\cite{ckn} it was shown that
\begin{equation}\label{eq:ckn}
c_{L_1}(B_R)\gtrsim (\log R)^c
\end{equation}
for some universal constant $c>0$. This result is of  importance due to an application to theoretical computer science; see~\cite{CKN09,ckn,Naor10} for a detailed discussion. Evaluating the supremum over those $c>0$ satisfying~\eqref{eq:ckn} remains an important open problem. Theorem~\ref{thm:main} implies the following sharp bound on the bi-Lipschitz distortion of $B_R$ into a $p$-convex Banach space:

\begin{theorem}\label{thm:distortion}
If a Banach space $(X,\|\cdot\|)$ satisfies~\eqref{eq:p-convexity} then for every $R\ge 4$ we have
$$
c_X(B_R) \gtrsim \frac{1}{K_p(X)}\left(\frac{\log R}{\log\log R}\right)^{1/p}.
$$
\end{theorem}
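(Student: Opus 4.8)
The plan is to deduce Theorem~\ref{thm:distortion} from Theorem~\ref{thm:main} by the standard device of rescaling a bi-Lipschitz embedding of a ball into a globally Lipschitz map on all of $\H$, and then invoking the compression bound~\eqref{eq:asymp}. Suppose $D=c_X(B_R)$ and fix $g:B_R\to X$ witnessing this distortion, so that after rescaling we may assume $d_W(x,y)\le \|g(x)-g(y)\|\le D\, d_W(x,y)$ for all $x,y\in B_R$. The first step is to turn $g$ into a $1$-Lipschitz map $f:\H\to X$: replace $g$ by $g/D$, which is $1$-Lipschitz on $B_R$ and satisfies $\|g(x)-g(y)\|/D\ge d_W(x,y)/D$ on $B_R$. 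Since $B_R$ is finite and $X$ is a Banach space, extend this to a $1$-Lipschitz function $f$ on all of $\H$ (e.g.\ coordinatewise by the classical nonlinear Hahn--Banach / McShane extension, or simply work with the restricted cocycle reduction — any $1$-Lipschitz extension will do). For this $f$ we then know that for every pair $x,y\in B_R$ with $d_W(x,y)=n$ one has $\omega_f(n)\ge n/D$, i.e.\ $\omega_f(n)/n\ge 1/D$.

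The second step is to choose the scale at which to apply Theorem~\ref{thm:main}. Apply it with $t\asymp \sqrt{R}$ (precisely, the largest integer with $t^2\le R$, which is $\ge 2$ once $R\ge 4$); this produces an integer $n$ with $t\le n\le t^2\le R$ such that
\[
\frac{\omega_f(n)}{n}\lesssim K_p(X)\left(\frac{\log\log n}{\log n}\right)^{1/p}.
\]
Because $n\le R$, there genuinely exist points $x,y\in B_R$ realizing distance $n$ (the word metric on $\H$ takes all integer values up to $R$, and balls are nested), so the lower bound $\omega_f(n)/n\ge 1/D$ from the previous paragraph applies. Combining the two inequalities gives
\[
\frac{1}{D}\lesssim K_p(X)\left(\frac{\log\log n}{\log n}\right)^{1/p}.
\]

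The third and final step is to remove $n$ in favour of $R$. Since $n\ge t\asymp \sqrt R$, we have $\log n\gtrsim \log R$ and $\log\log n\le \log\log R$ (for $R$ large; for small $R$ one absorbs the loss into the implicit constant, which is why the hypothesis $R\ge 4$ suffices together with adjusting constants). Hence $(\log\log n/\log n)^{1/p}\lesssim (\log\log R/\log R)^{1/p}$, and rearranging yields
\[
D=c_X(B_R)\gtrsim \frac{1}{K_p(X)}\left(\frac{\log R}{\log\log R}\right)^{1/p},
\]
as desired.

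The only genuinely substantive point is the reduction to a $1$-Lipschitz map on all of $\H$ together with the fact that the compression bound of Theorem~\ref{thm:main} is output at a controlled scale $n\in[t,t^2]$ rather than at an uncontrolled scale; this is exactly what makes the argument work for \emph{balls} and is the reason Theorem~\ref{thm:main} was stated with the quadratic window. Everything else is bookkeeping with logarithms. One should be mildly careful that the extension step does not degrade the lower distortion bound — but since we only use $\omega_f$ on pairs already lying inside $B_R$, and the extension only adds new points, no care is actually needed there.
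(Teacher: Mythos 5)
There is a genuine gap, and it sits exactly at the point you dismiss as needing no care. The compression rate $\omega_f$ is by definition the \emph{largest} function with $\|f(x)-f(y)\|\ge\omega_f(d_W(x,y))$ for \emph{all} $x,y\in\H$, i.e.\ $\omega_f(n)$ is an infimum over every pair of points of $\H$ at distance $n$, not a quantity witnessed by some pair inside $B_R$. After you extend the rescaled embedding to a $1$-Lipschitz map $f$ on all of $\H$, you only control $\|f(x)-f(y)\|$ from below for pairs inside $B_R$; for pairs outside $B_R$ the extension gives no lower bound at all, and for any natural extension (e.g.\ one that is eventually constant, like the cutoff extension the paper itself uses) one has $\omega_f\equiv 0$. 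Consequently the conclusion of Theorem~\ref{thm:main}, which asserts that \emph{some} pair at distance $n\in[t,t^2]$ has small image distance, may be witnessed by points far outside $B_R$ and is then vacuous: your claimed inequality $\omega_f(n)\ge n/D$ is false, and the chain $1/D\lesssim K_p(X)(\log\log n/\log n)^{1/p}$ does not follow. The existence of well-separated pairs inside $B_R$ at distance $n$ bounds the supremum, not the infimum, of image distances at that scale.

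This is precisely the difficulty the paper's deduction is designed to overcome, and it cannot be bypassed by bookkeeping. The paper first cuts off $f$ to a $2D$-Lipschitz map $f^*$ supported in $B_R$ and agreeing with $f$ on $B_{R/2}$, then \emph{periodizes} it over a maximal $3R$-separated net $\mathcal N$ to get $f^{**}$, and finally passes to the semi-normed ultrapower-type space $Y=\ell_\infty(\H,X)$ with norm given by $\ell_p$-averages over balls $B_M$ along an ultrafilter, applying Theorem~\ref{thm:main} to $F(x)(z)=f^{**}(zx)-f^{**}(z)$. The point is that the compression of $F$ at the pair $(x,y)$ produced by Theorem~\ref{thm:main} is an \emph{average} over all translates $z$, and a definite proportion of the $z\in B_M$ (those with $zx$ in a translate $wB_{R/4}$ of the net) place both $zx$ and $zy$ inside a region where $f^{**}$ coincides with a translate of $f$, so the distortion hypothesis gives a uniform lower bound on that average; this is the content of \eqref{eq:lower M} and \eqref{eq:use doubling}. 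To repair your argument you would either have to reproduce this periodization-plus-averaging step or prove a version of Theorem~\ref{thm:main} localized to balls, which is not what is stated.
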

Thus in particular for $p\in (1,2]$ we have $c_{L_p}(B_R) \gtrsim \sqrt{p-1}\cdot  (\log R)^{\frac12-o(1)}$ and  for $p\ge 2$ we have  $c_{L_p}(B_R) \gtrsim (\log R)^{\frac{1}{p}-o(1)}$. Theorem~\ref{thm:distortion} is a formal consequence of Theorem~\ref{thm:main}. The  simple deduction of Theorem~\ref{thm:distortion} from Theorem~\ref{thm:main} is presented in Section~\ref{sec:deduce}. It follows from the results of~\cite{Ass83, Rao99} (see the
explanation in~\cite{GKL03,Tess08}) that for every $p\ge 2$ we have $c_{L_p}(B_R)\lesssim (\log R)^{1/p}$. Thus Theorem~\ref{thm:distortion} is sharp up to iterated logarithms.

\subsection{The case of Hilbert space}

In section~\ref{sec:hilbert} we prove the following Poincar\'e-type
inequality for functions on $\H$ taking value in Hilbert space:

\begin{theorem}\label{thm:ingtro discrete}
For every $f:\H\to L_2$ and every $R\in \N$ we have
\begin{equation}\label{eq:poin intro}
\sum_{x\in
B_R}\sum_{k=1}^{R^2}\frac{\left\|f(xc^k)-f(x)\right\|_2^2}{k^2}\lesssim
\sum_{x\in B_{22R}}
\left(\|f(xa)-f(x)\|^2_2+\|f(xb)-f(x)\|^2_2\right).
\end{equation}
\end{theorem}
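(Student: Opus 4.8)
The plan is to prove the Poincaré inequality \eqref{eq:poin intro} by relating the "long-range" differences $f(xc^k)-f(x)$ along the center to sums of "short-range" differences along the generators $a,b$, using the algebraic identity that expresses powers of the central element $c=aba^{-1}b^{-1}$ as products of $a$'s and $b$'s. Recall that $c^k$ can be realized as the commutator $[a^k,b^k]=a^kb^ka^{-k}b^{-k}$ (this is the key fact that makes $\H$'s center "deep"); more usefully, in $\H$ one has $c^{k^2}=[a^k,b^k]$, so a path from $x$ to $xc^{k^2}$ of length $O(k)$ exists, and the telescoping of $f$ along this path writes $f(xc^{k^2})-f(x)$ as a sum of roughly $4k$ terms each of the form $f(z a^{\pm 1})-f(z)$ or $f(z b^{\pm 1})-f(z)$ with $z$ ranging over a portion of $B_{22R}$ when $x\in B_R$ and $k\le R$. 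Cauchy--Schwarz then gives $\|f(xc^{k^2})-f(x)\|_2^2\lesssim k\sum_{z}\big(\|f(za)-f(z)\|_2^2+\|f(zb)-f(z)\|_2^2\big)$ over those $O(k)$ edge-vectors $z$.

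The first step is therefore to record the precise path identity and the bookkeeping of which group elements $z$ arise, checking that they all lie in $B_{22R}$ — the constant $22$ should come out of a crude but honest estimate on the word length of the intermediate vertices (one needs $x c^j$-type and $x a^i b^{j}$-type elements with $i,j\le k\le R$ and $d_W(e,x)\le R$, all of word length $\lesssim R$; the factor is slack, so I would not optimize). The second step is to substitute $k\mapsto k^2$: the left-hand side of \eqref{eq:poin intro} is $\sum_{x\in B_R}\sum_{k=1}^{R^2}\|f(xc^k)-f(x)\|_2^2/k^2$, and I want to reindex so that only perfect squares $k=m^2$ are "cheap". For general $k$ one writes $k$ as a sum of $O(\sqrt k)$ squares — or, more efficiently and standard in this area, one uses the four-square theorem or simply dyadic decomposition $k=\sum 2^{2i}\epsilon_i$ — to express $f(xc^k)-f(x)$ as a telescoping sum of $O(\sqrt k)$ pieces of the form $f(y c^{m^2})-f(y)$ with $m\le \sqrt k$ and $y\in B_{2R}$; each such piece costs $\lesssim m\cdot(\text{edge energy})$, so $\|f(xc^k)-f(x)\|_2^2\lesssim \sqrt k\sum_{m\le\sqrt k} m\sum_z(\cdots)\lesssim k\sum_z(\cdots)$ with $O(k)$ edge-vectors total. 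Summing over $k\le R^2$ and weighting by $1/k^2$, the crucial point is that $\sum_{k=1}^{R^2}\frac{1}{k^2}\cdot k\cdot(\text{multiplicity of a fixed edge})$ converges: a fixed edge $(z,za)$ with $z\in B_{22R}$ is hit by the path for $(x,k)$ only when $d_W(z,x)\lesssim k$ and $k$ lies in a controlled range, so each edge is counted with total weight $\sum_k \frac{1}{k^2}\cdot k\cdot O(1)=O(\log R)$ at worst; getting this to be $O(1)$ rather than $O(\log R)$ is where care is needed.

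Actually the right way to organize the weight bookkeeping, and the step I expect to be the main obstacle, is the double-counting: after expanding, the left side is bounded by $\sum_{x\in B_R}\sum_k \frac{1}{k^2}\cdot(\text{length of path})\cdot\sum_{\text{edges }e\text{ on path}}\|df(e)\|_2^2$, and one must interchange the order of summation to collect, for each fixed edge $e$ incident to $B_{22R}$, the total coefficient $\sum_{(x,k):\,e\in P(x,k)}\frac{\text{len}(P(x,k))}{k^2}$. For the edge to lie on the path $P(x,k)$ joining $x$ to $xc^k$, one needs both $x$ and $xc^k$ within distance $\asymp\sqrt k$ of $e$ in an appropriate sense, so the number of valid $x$ for a given $k$ is $\asymp k$ (the "thickness" of the path tube, since the path stays within $O(\sqrt k)$ of the geodesic segment and there are $\asymp k$ choices of where along a length-$\sqrt k$ path the edge sits, times $O(1)$ transverse choices — here one uses that the word-length ball growth of $\H$ is polynomial, $|B_r|\asymp r^4$, to count lattice points). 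Thus the coefficient of a fixed edge is $\asymp \sum_k \frac{1}{k^2}\cdot\sqrt k\cdot k=\sum_k k^{-1/2}$, which diverges — so this naive accounting is too lossy and one must instead exploit the $1/k^2$ weight against the *length* $\sqrt k$ of the $c^k$-path more cleverly, e.g. by not passing through perfect squares for all $k$ but grouping $k\in[2^j,2^{j+1})$ and using a single scale-$2^{j}$ path with the $\ell_2$-orthogonality-free Cauchy--Schwarz applied at that scale only. I would set up the final estimate so that the edge-energy of a generator edge at "location" $z$ is charged, for each dyadic scale $2^j\le R^2$, a weight $\asymp 2^{-j}$ (coming from $\sum_{k\asymp 2^j}k^{-2}\cdot\text{len}\asymp 2^{-2j}\cdot 2^{j/2}\cdot 2^{j}$, hmm) — the bottom line is that the clean statement forces the telescoping path for $c^k$ to have length $\Theta(\sqrt k)$, the weight $1/k^2$ is exactly what makes $\sum_k (\text{len})/k^2$ of the right order after the correct grouping, and the whole inequality is really the discrete dual of a Littlewood--Paley / $L_2$-semigroup estimate on $\H$. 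The remaining steps (summing over $x\in B_R$, absorbing constants, and checking the enlargement $B_R\to B_{22R}$) are then routine.
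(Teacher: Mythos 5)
There is a genuine gap, and you have in fact put your finger on it yourself without resolving it: the path/telescoping approach cannot give \eqref{eq:poin intro} without an extra factor of $\log R$, and no reorganization of the bookkeeping within that framework will remove it. Concretely, writing $f(xc^k)-f(x)$ as a telescoping sum along a word of length $\asymp\sqrt k$ and applying Cauchy--Schwarz gives, for each \emph{fixed} scale $k$, the essentially sharp bound $\sum_{x\in B_R}\|f(xc^k)-f(x)\|_2^2\lesssim k\sum_{z\in B_{CR}}\bigl(\|f(za)-f(z)\|_2^2+\|f(zb)-f(z)\|_2^2\bigr)$ (each edge is hit by $\asymp\sqrt k$ starting points $x$, and the Cauchy--Schwarz factor is another $\sqrt k$). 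Dividing by $k^2$ and summing over $k\le R^2$ charges each fixed edge a total weight $\asymp\sum_{k\le R^2}k^{-1}\asymp\log R$; your dyadic grouping of $k\in[2^j,2^{j+1})$ merely collapses each block to a single scale and still leaves $\asymp\log R$ blocks each contributing a constant. This loss is not an artifact of sloppy counting: for a function oscillating at a single vertical scale $2^j$ the per-scale estimate is tight, so any argument that bounds each scale separately by the full gradient energy must lose the number of scales. The content of the theorem is precisely that one function cannot saturate the per-scale bound at all $\log(R^2)$ scales simultaneously, and to see that one needs some orthogonality across scales, which the triangle inequality along paths does not provide. (Note also that the log-lossy version is useless for the intended application: it would give $\log R\lesssim D^2\log R$ instead of $c_{L_2}(B_R)\gtrsim\sqrt{\log R}$.)

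The paper obtains the scale-orthogonality spectrally rather than combinatorially: after a cutoff (using Kleiner's discrete local Poincar\'e inequality to absorb the $\frac{1}{R^2}\sum\|f\|^2$ term), a discretization to the real Heisenberg group $\H(\R)$, and an invariant-mean/ultralimit averaging that replaces the function by a continuous $1$-cocycle $\gamma\in Z^1(\pi)$ of a unitary representation, it suffices (because the inequality is a sum of squares of norms) to verify the estimate on each irreducible representation. By Stone--von Neumann these are the representations $\pi_\lambda$, and there the cocycle is (approximately) a coboundary $\pi_\lambda(x)h-h$, for which one computes $\int_1^\infty\|\gamma(c^t)\|^2t^{-2}\,dt\lesssim\min\{|\lambda|,1\}$, which is then dominated by the horizontal gradient term frequency by frequency. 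It is this decomposition into frequencies $\lambda$, with the vertical contribution of each frequency summable against its horizontal energy, that replaces (and cannot be mimicked by) the per-$(x,k)$ path estimates in your proposal; if you want to salvage an elementary route you would need a substitute source of almost-orthogonality between the dyadic vertical scales, e.g.\ a genuine Littlewood--Paley or martingale decomposition of $f$, not just a regrouping of the sum over $k$.
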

This result has the following two sharp consequences. First, assume
that $\theta:(0,\infty)\to [0,\infty)$ is nondecreasing, and that
$\theta\le \omega_f$ for some $1$-Lipschitz $f:\H\to L_2$. Then
since $|B_{22R}|\asymp |B_R|$ and $d_W(c^k,e)\asymp \sqrt{k}$ for all
$k\in \N$, inequality~\eqref{eq:poin intro} implies that
\begin{equation}\label{eq:sharp integral}
\frac{1}{2}\int_{1}^\infty \frac{\theta\left(t\right)^2}{t^3}dt =\int_{1}^\infty
\frac{\theta\left(\sqrt{s}\right)^2}{s^2}ds \lesssim
\sum_{k=1}^\infty \frac{\theta\left(\sqrt{k}\right)^2}{k^2}\lesssim
1.
\end{equation}
 Combined with \cite[Theorem 1]{Tess}, we obtain
\begin{corollary}
A nondecreasing function $\theta:(0,\infty)\to [0,\infty)$ satisfies
$\theta \le \omega_{f}$ for some Lipschitz function $f: \H\to L_2$
if and only if
 \begin{equation}\int_1^{\infty}\left(\frac{\theta(t)}{t}\right)^2 \frac{dt}{t}<\infty.
 \end{equation}\label{compressionChar}
\end{corollary}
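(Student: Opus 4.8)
The plan is to prove the two implications of the equivalence separately: the ``only if'' direction will follow from the Poincar\'e inequality of Theorem~\ref{thm:ingtro discrete}, while the ``if'' direction will be quoted from \cite[Theorem 1]{Tess}.

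For the ``only if'' direction, suppose $\theta\le\omega_f$ for some $L$-Lipschitz $f:\H\to L_2$. First I would reduce to $L=1$: replacing $f$ by $f/L$ and $\theta$ by $\theta/L$ changes neither the hypothesis (since $\omega_{f/L}=\omega_f/L$) nor the conclusion (which is invariant under scaling $\theta$). Then I would fix $R\in\N$ and feed $f$ into \eqref{eq:poin intro}. On the right-hand side each summand is at most $\|f(xa)-f(x)\|_2^2+\|f(xb)-f(x)\|_2^2\le 2$ by $1$-Lipschitzness, so the right-hand side is $\lesssim|B_{22R}|\asymp|B_R|$, using the polynomial volume growth of $\H$. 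On the left-hand side, left-invariance of $d_W$ gives $\|f(xc^k)-f(x)\|_2\ge\omega_f(d_W(c^k,e))\ge\theta(d_W(c^k,e))$, and the quadratic-distortion estimate $d_W(c^k,e)\asymp\sqrt k$ together with monotonicity of $\theta$ lets me replace $\theta(d_W(c^k,e))$ by $\theta(c_0\sqrt k)$ for a universal $c_0\in(0,1)$. Dividing \eqref{eq:poin intro} by $|B_R|$ and letting $R\to\infty$ yields $\sum_{k\ge1}\theta(c_0\sqrt k)^2/k^2\lesssim 1$; a block-by-block comparison of this series with an integral, followed by the substitution $t=c_0\sqrt s$, turns it into $\int_1^\infty(\theta(t)/t)^2\,dt/t<\infty$. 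This is exactly the chain of estimates already displayed in \eqref{eq:sharp integral}, with the constant $c_0$ coming from $d_W(c^k,e)\asymp\sqrt k$ absorbed.

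For the ``if'' direction I would simply invoke \cite[Theorem 1]{Tess}: if $\int_1^\infty(\theta(t)/t)^2\,dt/t<\infty$ then that result produces a Lipschitz map $f:\H\to L_2$ (in fact an equivariant one coming from a positive-definite kernel on $\H$) whose compression rate dominates $\theta$.

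The hard part is not really located in this corollary. Given Theorem~\ref{thm:ingtro discrete}, the ``only if'' direction is bookkeeping whose only subtle ingredients are the standard facts $d_W(c^k,e)\asymp\sqrt k$ and $|B_{22R}|\asymp|B_R|$ recorded above, plus the care needed to carry the universal constant $c_0$ through the passage from a series to an integral when $\theta$ is merely nondecreasing. The genuinely substantive content --- the construction of Hilbert-space-valued Lipschitz maps with a prescribed lower bound on compression --- is what the ``if'' direction imports from \cite{Tess}.
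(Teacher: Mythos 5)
Your proposal is correct and follows essentially the same route as the paper: the ``only if'' direction is exactly the chain of estimates the paper records in~\eqref{eq:sharp integral} (apply Theorem~\ref{thm:ingtro discrete}, bound the right-hand side by $|B_{22R}|\asymp|B_R|$ via Lipschitzness, use $\theta\le\omega_f$ with $d_W(c^k,e)\asymp\sqrt{k}$, divide by $|B_R|$, and compare the resulting series with the integral), and the ``if'' direction is likewise quoted from \cite[Theorem 1]{Tess}. Your extra care with the constant $c_0$ from $d_W(c^k,e)\asymp\sqrt{k}$ and with the scaling reduction to $L=1$ is harmless bookkeeping that the paper leaves implicit.
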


A second corollary of Theorem~\ref{thm:ingtro discrete} yields a
sharp bound (up to universal constants) on $c_{L_2}(B_R)$. Indeed,
fix $R\ge 2$ and assume that $f:B_{R}\to L_2$ satisfies $d_W(x,y)\le
\|f(x)-f(y)\|\le Dd_W(x,y)$ for all $x,y\in B_{R}$. Let $f^*:\H\to
L_2$ have Lipschitz constant at most $2D$ and coincide with $f$ on
$B_{R/2}$ (see equation~\eqref{eq:extend} for an explicit formula
defining such an extension $f^*$). It follows from~\eqref{eq:poin
intro} applied to $f^*$ that $D^2\gtrsim \sum_{k=1}^R
\frac{1}{k}\gtrsim \log R$. Thus $c_{L_2}(B_R)\gtrsim \sqrt{\log
R}$. In conjunction with the previously quoted upper bound on
$c_{L_2}(B_R)$, we have
\begin{corollary}\label{eq:hilbert distortion}
For every $R\ge 2$ we have $c_{L_2}(B_R)\asymp \sqrt{\log R}$.
\end{corollary}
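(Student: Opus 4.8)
The upper bound $c_{L_2}(B_R)\lesssim\sqrt{\log R}$ is exactly the case $p=2$ of the estimate $c_{L_p}(B_R)\lesssim(\log R)^{1/p}$ recalled above, so the content of the corollary is the matching lower bound $c_{L_2}(B_R)\gtrsim\sqrt{\log R}$, and the plan is to read it off from the Poincar\'e inequality~\eqref{eq:poin intro} of Theorem~\ref{thm:ingtro discrete}. Since $1\le c_{L_2}(B_R)<\infty$ always, it is enough to treat $R$ large. Fix such an $R$, set $D=c_{L_2}(B_R)$, and let $f:B_R\to L_2$ satisfy $d_W(x,y)\le\|f(x)-f(y)\|\le D\,d_W(x,y)$ for all $x,y\in B_R$. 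Because~\eqref{eq:poin intro} concerns functions on all of $\H$ while $f$ is only defined on $B_R$, the first step is to pass to a global $f^*:\H\to L_2$ with $\mathrm{Lip}(f^*)\lesssim D$ that still coincides with $f$ on a ball of radius $\asymp R$; the explicit truncation~\eqref{eq:extend} does exactly this, with Lipschitz constant $2D$ and agreeing with $f$ on $B_{R/2}$ (alternatively one could invoke a general Whitney-type Lipschitz extension into $L_2$, available with a universally bounded loss because $(\H,d_W)$ is doubling). What matters is that the lower bound is kept where it will be used: $\|f^*(x)-f^*(y)\|=\|f(x)-f(y)\|\ge d_W(x,y)$ for all $x,y\in B_{R/2}$.

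Next I would apply~\eqref{eq:poin intro} to $f^*$ at a radius $\rho\asymp\sqrt R$ chosen small enough that $B_{(1+C_0)\rho}\subseteq B_{R/2}$, where $C_0$ is a constant with $d_W(e,c^k)\le C_0\sqrt k$ for all $k$ (here $c=aba^{-1}b^{-1}$ generates the center and $d_W(e,c^k)\asymp\sqrt k$). The right-hand side of~\eqref{eq:poin intro} uses only that $f^*$ is $2D$-Lipschitz and that $a,b$ move points a distance $1$, so it is $\lesssim D^2|B_{22\rho}|$. For the left-hand side, whenever $x\in B_\rho$ and $1\le k\le\rho^2$ both $x$ and $xc^k$ lie in $B_{(1+C_0)\rho}\subseteq B_{R/2}$, so $\|f^*(xc^k)-f^*(x)\|=\|f(xc^k)-f(x)\|\ge d_W(e,c^k)\gtrsim\sqrt k$, whence the left-hand side is $\gtrsim|B_\rho|\sum_{k=1}^{\rho^2}\frac1k\asymp|B_\rho|\log R$. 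Finally, the discrete Heisenberg group has polynomial growth, so $|B_{22\rho}|\asymp\rho^4\asymp|B_\rho|$ (the instance $|B_{22\rho}|\asymp|B_\rho|$ was already used above); dividing through by $|B_\rho|$ leaves $\log R\lesssim D^2$, i.e. $c_{L_2}(B_R)\gtrsim\sqrt{\log R}$, and together with the upper bound this gives $c_{L_2}(B_R)\asymp\sqrt{\log R}$.

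Granting Theorem~\ref{thm:ingtro discrete}, the rest of the corollary is essentially book-keeping; the only delicate point is the choice of scale in the second step. One must run~\eqref{eq:poin intro} at a radius $\rho$ for which the ``vertical'' translates $xc^k$ of points $x\in B_\rho$ never leave the ball $B_{R/2}$ on which $f^*$ retains its lower Lipschitz bound, and because $d_W(e,c^k)\asymp\sqrt k$ this restricts $k$ to the range $1\le k\lesssim R$ --- which is precisely what converts the divergent harmonic-type sum $\sum_k k^{-1}$ into the factor $\asymp\log R$ responsible for the bound $D\gtrsim\sqrt{\log R}$. Taking $\rho$ any larger would allow these translates to wander out of the region where the lower Lipschitz bound is available, so the interplay between the scale $\rho^2$ of the inner sum and the radius $R$ of validity of the embedding is the one thing that has to be tracked carefully.
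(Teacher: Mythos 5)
Your proposal is correct and follows essentially the same route as the paper: extend $f$ to a $2D$-Lipschitz $f^*$ agreeing with $f$ on $B_{R/2}$ via~\eqref{eq:extend}, apply~\eqref{eq:poin intro} at a scale where the vertical translates stay inside $B_{R/2}$, and harvest $D^2\gtrsim\sum_k 1/k\gtrsim\log R$ from $d_W(e,c^k)\asymp\sqrt k$, with the matching upper bound quoted from the $L_p$ embedding result. The only quibble is that the scale choice is less rigid than you suggest (any $\rho\lesssim R$ with a suitable constant works, since $d_W(e,c^k)\le 4\sqrt k\le 4\rho$ for $k\le\rho^2$), but your choice $\rho\asymp\sqrt R$ is valid and matches the paper's.
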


Roughly speaking, the proof of Theorem~\ref{thm:ingtro discrete}
proceeds via a reduction to the case of $1$-cocycles corresponding
to the irreducible representations of $\H$ (see Section
\ref{sec:hilbert}). But actually, since the representation theory of
the continuous Heisenberg group is simpler than the representation
theory of the discrete Heisenberg group $\H$, we  first apply a
discretization argument which reduces Theorem~\ref{thm:ingtro
discrete} to an inequality on the real Heisenberg group. Then an
averaging argument reduces the proof to an inequality on cocycles.
Every unitary representation of the continuous Heisenberg group decomposes as a direct
integral of irreducibles, and cocycles themselves can be
decomposed accordingly. Since the desired inequality involves a sum of squares of norms, it suffices to prove it for cocycles corresponding to irreducible
representations (that is, for each direct integrand separately). The computation for irreducible representations is
carried out in Section~\ref{sec:irreducible}.

\section{Sublinear growth of Heisenberg cocycles in ergodic
spaces}\label{sec:non quant}

Write $c\eqdef [a,b]=aba^{-1}b^{-1}$. Thus $c$ lies in the center of $\H$ and for every $n\in \N$ we have $d_W\left(c^{n^2},e_\H\right)=4n$ (in fact $c^{n^2}=[a^n,b^n]=a^nb^na^{-n}b^{-n}$).

Let $(X,\|\cdot \|)$ be a Banach space  and $\pi:G\to \mathrm{Aut}(X)$ be an action of $\H$ on $X$ by linear isometric automorphisms.  In addition let $f\in Z^1(\pi)$ be  a $1$-cocycle, so $f:\H\to X$ and for all $x,y\in\H$ we have $f(xy)=\pi(x)f(y)+f(x)$. We assume in what follows that $f$ is $1$-Lipschitz, or equivalently that $\max\{\|f(a)\|,\|f(b)\|\}=1$.

In this section we quickly show that if $X$ is an ergodic Banach space then $\liminf_{t\to\infty}\omega_f(t)/t = 0$, but without obtaining any quantitative bounds.

If $X$ is ergodic then the operator on $X$ defined by
\[Px \eqdef \lim_{N\to\infty}\frac{1}{N}\sum_{n=0}^{N-1}\pi(c)^{n}x\]
is a contraction onto the subspace $X_0 \subseteq X$ of $\pi(c)$-invariant vectors, and since $Px = x$ for any $x \in X_0$ it follows at once that $P$ is idempotent.  Also, since $c$ is central in $\H$, the projection $P$ commutes with $\pi(g)$ for all $g\in \H$, and hence the maps $g\mapsto Pf(g)$ and $g\mapsto (I - P)f(g)$ are both still members of $Z^1(\pi)$.  Since $P$ and $I - P$ are bounded, these cocycles are both still Lipschitz functions from $\H$ to $X$.

We complete the proof by showing that for any $\varepsilon > 0$ we have
\begin{equation}\label{eq:eps}
\left\|f\left(c^{N^2}\right)\right\| \leq 2\varepsilon d_W\left(e_\H,c^{N^2}\right) \le 8\varepsilon N
 \end{equation}
 for all sufficiently large $N$. To prove this we consider the two cocycles $Pf$ and $(I-P)f$ separately.  On the one hand, $Pf$ takes values among the $\pi(c)$-invariant vectors, and hence the cocycle identity implies that
\[Pf\left(c^N\right) = \sum_{n=0}^{N-1}\pi(c)^n Pf(c) = NPf(c),\]
and therefore $\left\|Pf\left(c^N\right)\right\| = N\|Pf(c)\|$.  However, $\left\|Pf\left(c^N\right)\right\|\le d_W\left(e_\H,c^N\right)\lesssim \sqrt{N}$, so these relations are compatible only if $Pf(c) = 0$.

On the other hand, let $\widetilde{f} \eqdef (I-P)f$ and for each $K\geq 1$,
\[v_K \eqdef -\frac{1}{K}\sum_{k=1}^K f\left(c^k\right).\]
Observe from the cocycle identity and the centrality of $c$ that
\begin{multline}\label{eq:v_K invariant}
-\pi(g)v_K + \widetilde{f}(g) = \frac{1}{K}\sum_{k=1}^K \left(\pi(g)\widetilde{f}\left(c^k\right) + \widetilde{f}(g)\right) = \frac{1}{K}\sum_{k=1}^K \widetilde{f}\left(gc^k\right)\\ = \frac{1}{K}\sum_{k=1}^K \widetilde{f}\left(c^kg\right) = \frac{1}{K}\sum_{k=1}^K \pi\left(c^k\right)\widetilde{f}(g) - v_K.
\end{multline}

Re-arranging~\eqref{eq:v_K invariant} gives
\begin{equation}\label{eq:tilde}
\widetilde{f}(g) = \pi(g)v_K - v_K + \frac{1}{K}\sum_{k=1}^K\pi(c)^k\widetilde{f}(g).\end{equation}
For any fixed $g \in \H$ the last term of this right-hand side of~\eqref{eq:tilde} converges to $P(I-P)f(g) = 0$ (using again that $X$ is ergodic), and so in particular once $K$ is sufficiently large we obtain that for all $g\in \H$ we have,
\[\max\left\{\left\|\widetilde{f}\left(a^{\pm 1}\right) - \left(\pi\left(a^{\pm 1}\right)v_K - v_K\right)\right\|,\ \left\|\widetilde{f}\left(b^{\pm 1}\right) - \left(\pi\left(b^{\pm 1}\right)v_K - v_K\right)\right\|\right\} \leq \varepsilon.\]

Having obtained this approximation to $\widetilde{f}$ by a coboundary, let $c^{N^2} = s_1s_2\cdots s_{4N}$ be an expression for $c^{N^2}$ as a word in $S$, and observe from another appeal to the cocycle identity that
\begin{multline*}
\widetilde{f}\left(c^{N^2}\right) = \sum_{i=0}^{4N - 1}\pi\left(s_1s_2\cdots s_i\right)\widetilde{f}\left(s_{i+1}\right) = \sum_{i=0}^{4N - 1}\pi\left(s_1s_2\cdots s_i\right)\left(\pi\left(s_{i+1}\right)v_K - v_K\right) + R_N\\
= \pi(s_1s_2\cdots s_{4N})v_K - v_K + R_N
\end{multline*}
for some remainder $R_N$ which is a sum of $4N$ terms all of norm at most $\varepsilon$.  Since the action $\pi$ is isometric and we may let $N$ grow independently of $K$ we obtain
\[\left\|\widetilde{f}\left(c^{N^2}\right)\right\| \leq 2\|v_K\| + \|R_N\| \leq 8\varepsilon N\]
for all sufficiently large $N$.  Since $\varepsilon$ was arbitrary and $\widetilde{f}\left(c^{N^2}\right) = f\left(c^{N^2}\right)$ by our analysis of $Pf$ above, this completes the proof of~\eqref{eq:eps}.

\section{A uniform convexity lemma for ergodic averages}\label{sec:erg}

We prove here a simple lemma on the behavior of ergodic averages in $p$-convex Banach spaces.

\begin{lemma}\label{lem:cauchy}
Assume that $(X,\|\cdot\|)$ satisfies~\eqref{eq:p-convexity}. Fix $z\in X$ and an operator $T:X\to X$ with $\|T\|\le 1$. For every integer $n\ge 0$ denote
$$
s_n\eqdef \frac{1}{2^n}\sum_{j=0}^{2^n-1} T^jz.
$$
Then for every $\ell\in \N$ we have:
\begin{equation}\label{eq:summed version}
\sum_{i=0}^\infty \frac{1}{2^\ell}\sum_{j=0}^{2^\ell-1}\left\|s_{(i+1)\ell}-T^{j2^{i\ell}}s_{i\ell}\right\|^p\le (2K)^p\|z\|^p.
\end{equation}
\end{lemma}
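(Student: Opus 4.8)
The plan is to reduce Lemma~\ref{lem:cauchy} to a dimension-free ``variance inequality'' for $p$-convex norms and then telescope a monotone sequence.

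First I would record the elementary algebra of the iterated averages. Grouping $\{0,1,\dots,2^{i\ell+t}-1\}$ according to residues modulo $2^{i\ell}$ shows that for every $i\ge 0$ and $0\le t\le \ell$ one has $s_{i\ell+t}=\frac{1}{2^t}\sum_{k=0}^{2^t-1}T^{k2^{i\ell}}s_{i\ell}$; in particular $s_{(i+1)\ell}$ is the arithmetic mean of the $2^\ell$ vectors $x_j\eqdef T^{j2^{i\ell}}s_{i\ell}$, $0\le j<2^\ell$. Since $\|T\|\le 1$ we have $\|x_j\|\le \|s_{i\ell}\|$ for every $j$, so $s_{(i+1)\ell}$ is a convex combination of vectors of norm at most $\|s_{i\ell}\|$; hence $\|s_{(i+1)\ell}\|\le\|s_{i\ell}\|$ and, iterating, $\|s_{i\ell}\|\le\|s_0\|=\|z\|$ for all $i$.

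The crux is the following inequality, which I claim holds for every $N\ge 1$, every $x_0,\dots,x_{N-1}\in X$, and $\bar x\eqdef\frac1N\sum_{j}x_j$, whenever $(X,\|\cdot\|)$ satisfies~\eqref{eq:p-convexity}:
\[
\frac1N\sum_{j=0}^{N-1}\|x_j-\bar x\|^p\ \le\ (2K)^p\left(\frac1N\sum_{j=0}^{N-1}\|x_j\|^p-\|\bar x\|^p\right).
\]
To prove it I would average the two-point inequality~\eqref{eq:p-convexity} over all ordered pairs $(x_j,x_k)$, obtaining $\frac1{N^2}\sum_{j,k}\|\tfrac{x_j+x_k}{2}\|^p\le \frac1N\sum_j\|x_j\|^p-\frac1{K^p}\cdot\frac1{N^2}\sum_{j,k}\|\tfrac{x_j-x_k}{2}\|^p$; since $u\mapsto\|u\|^p$ is convex and the average of the midpoints $\tfrac{x_j+x_k}{2}$ over all pairs equals $\bar x$, Jensen's inequality bounds the left-hand side below by $\|\bar x\|^p$. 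Rearranging and multiplying by $2^p$ gives $\frac1{N^2}\sum_{j,k}\|x_j-x_k\|^p\le (2K)^p\big(\frac1N\sum_j\|x_j\|^p-\|\bar x\|^p\big)$. On the other hand $\|x_j-\bar x\|=\|\frac1N\sum_k(x_j-x_k)\|\le\frac1N\sum_k\|x_j-x_k\|$, so a second application of Jensen (to the convex map $t\mapsto t^p$ on $[0,\infty)$) yields $\frac1N\sum_j\|x_j-\bar x\|^p\le\frac1{N^2}\sum_{j,k}\|x_j-x_k\|^p$, and combining the two displays proves the claim. I expect this to be the only genuine obstacle: the ``obvious'' route of decomposing the $2^\ell$-fold average into a depth-$\ell$ binary tree and applying~\eqref{eq:p-convexity} level by level loses a multiplicative factor growing like $\ell^{p-1}$ (because $\|\cdot\|^p$ is not subadditive for $p>1$), so one must instead argue in the level-free, pairwise manner above, which is what produces a constant independent of $\ell$.

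It remains to assemble the pieces. Applying the displayed inequality with $N=2^\ell$, $x_j=T^{j2^{i\ell}}s_{i\ell}$ and $\bar x=s_{(i+1)\ell}$, and using $\frac1{2^\ell}\sum_j\|x_j\|^p\le\|s_{i\ell}\|^p$ from the contraction property of $T$, we get for every $i\ge 0$
\[
\frac{1}{2^\ell}\sum_{j=0}^{2^\ell-1}\left\|s_{(i+1)\ell}-T^{j2^{i\ell}}s_{i\ell}\right\|^p\ \le\ (2K)^p\left(\|s_{i\ell}\|^p-\|s_{(i+1)\ell}\|^p\right).
\]
Summing over $i\ge 0$, the right-hand side telescopes: its partial sum up to $i=I$ equals $(2K)^p\big(\|s_0\|^p-\|s_{(I+1)\ell}\|^p\big)\le (2K)^p\|s_0\|^p=(2K)^p\|z\|^p$. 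Letting $I\to\infty$ gives exactly~\eqref{eq:summed version}, completing the proof.
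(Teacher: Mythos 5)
Your proposal is correct and follows essentially the same route as the paper: the identity $s_{(i+1)\ell}=\frac{1}{2^\ell}\sum_{j=0}^{2^\ell-1}T^{j2^{i\ell}}s_{i\ell}$, the averaged $p$-convexity ("variance") inequality with constant $(2K)^p$, the bound $\frac{1}{2^\ell}\sum_j\|T^{j2^{i\ell}}s_{i\ell}\|^p\le\|s_{i\ell}\|^p$ from $\|T\|\le 1$, and the telescoping sum over $i$. The only difference is that you prove the averaging inequality~\eqref{eq:convex averages} directly (via the pairwise average and two applications of Jensen, which is a valid argument), whereas the paper simply cites~\cite[Lem.~3.1]{MN10} for it.
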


\begin{proof} A consequence of~\eqref{eq:p-convexity} is that for every $x_1,\ldots,x_n\in X$ we have:
\begin{equation}\label{eq:convex averages}
\frac{1}{n}\sum_{i=1}^n\left\|x_i-\frac{1}{n}\sum_{j=1}^n x_j\right\|^p\le (2K)^p\left(\frac{1}{n}\sum_{i=1}^n\|x_i\|^p-\left\|\frac{1}{n}\sum_{i=1}^nx_i\right\|^p\right).
\end{equation}
For the derivation of~\eqref{eq:convex averages} from~\eqref{eq:p-convexity} see~\cite[Lem. 3.1]{MN10}.

Due to the identity
$$
s_{(i+1)\ell}=\frac{1}{2^\ell}\sum_{j=0}^{2^\ell-1}T^{j2^{i\ell}}s_{i\ell},
$$
inequality~\eqref{eq:convex averages} implies that:
\begin{multline}\label{eq:for telescope}
\frac{1}{2^\ell}\sum_{j=0}^{2^\ell-1} \left\|T^{j2^{i\ell}}s_{i\ell}-s_{(i+1)\ell}\right\|^p\le (2K)^p\left(\frac{1}{2^\ell}\sum_{j=0}^{2^\ell-1}
\left\|T^{j2^{i\ell}}s_{i\ell}\right\|^p-\left\|s_{(i+1)\ell}\right\|^p\right)\\\le (2K)^p\left(\left\|s_{i\ell}\right\|^p-\left\|s_{(i+1)\ell}\right\|^p\right).
\end{multline}
The desired inequality~\eqref{eq:summed version} now follows by summing~\eqref{eq:for telescope} over $i\in\{0,1,\ldots\}$.
\end{proof}

\section{Estimates for Heisenberg cocycles}\label{sec:cocycles}

Let $\pi:\H\to\mathrm{Aut}(X)$ and $f \in Z^1(\pi)$ be as in Section~\ref{sec:non quant}.  For every $n\in \N$ define a linear operator $P_n:X\to X$ by
$$
P_n\eqdef  \frac{1}{2^n}\sum_{j=0}^{2^n-1} \pi(c)^j.
$$
Thus $\|P_n\|\le 1$.

\begin{lemma}\label{lem:ab}
Assume that $(X,\|\cdot\|)$ satisfies~\eqref{eq:p-convexity}. Then for every $\ell,k,m\in \N$ there exist integers $i\in [k+1,k+m]$ and $j\in [0,2^\ell-1]$ satisfying for all $n\in \N$,
\begin{equation}\label{eq:n^2}
\left\|\pi\left(c^{-j2^{i\ell}}\right)P_{(i+1)\ell}
f\left(c^{n^2}\right)-P_{i\ell}f\left(c^{n^2}\right)\right\|\le \frac{16Kn}{m^{1/p}}.
\end{equation}
\end{lemma}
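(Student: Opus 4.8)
The plan is to combine the ergodic-averaging identity from Section~\ref{sec:non quant} with the quantitative uniform-convexity estimate of Lemma~\ref{lem:cauchy}, applied to the operator $T=\pi(c)$ and a judicious choice of vector $z$. First I would fix $g\in\H$ and consider the vectors $s_n(g)\eqdef P_nf(g)=\frac{1}{2^n}\sum_{j=0}^{2^n-1}\pi(c)^jf(g)$, so that $s_0(g)=f(g)$. Since $\|P_n\|\le 1$, Lemma~\ref{lem:cauchy} with $z=f(g)$ gives
\[
\sum_{i=0}^\infty \frac{1}{2^\ell}\sum_{j=0}^{2^\ell-1}\left\|P_{(i+1)\ell}f(g)-\pi\left(c^{j2^{i\ell}}\right)P_{i\ell}f(g)\right\|^p\le (2K)^p\|f(g)\|^p,
\]
using the identity $s_{(i+1)\ell}=\frac{1}{2^\ell}\sum_{j=0}^{2^\ell-1}T^{j2^{i\ell}}s_{i\ell}$ together with $P_n$ being a polynomial in $\pi(c)$ that commutes with $\pi(c)$. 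Then, since $\pi$ acts by isometries, $\|\pi(c^{j2^{i\ell}})P_{i\ell}f(g)\| = \|P_{i\ell}f(g)\cdot(\text{applied isometry})\|$, and more to the point the quantity inside the norm in~\eqref{eq:n^2} is the isometric image under $\pi(c^{-j2^{i\ell}})$ of exactly the summand above; hence the estimate transfers verbatim.

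Next I would take $g=c^{n^2}$ and bound $\|f(c^{n^2})\|\le d_W(e_\H,c^{n^2})=4n$, so the right-hand side becomes $(2K)^p(4n)^p=(8Kn)^p$. The sum $\sum_{i=0}^\infty\frac{1}{2^\ell}\sum_{j=0}^{2^\ell-1}(\cdots)^p$ is thus at most $(8Kn)^p$; restricting the sum to $i\in[k+1,k+m]$ only decreases it, so the average of the inner terms over this block of $m$ values of $i$ and over $j\in[0,2^\ell-1]$ is at most $(8Kn)^p/m$. By averaging (pigeonhole over both $i$ and $j$), there exist $i\in[k+1,k+m]$ and $j\in[0,2^\ell-1]$ with
\[
\left\|\pi\left(c^{-j2^{i\ell}}\right)P_{(i+1)\ell}f\left(c^{n^2}\right)-P_{i\ell}f\left(c^{n^2}\right)\right\|^p\le \frac{(8Kn)^p}{m},
\]
i.e.\ the bound $8Kn/m^{1/p}$; absorbing a possible constant (or simply replacing the Lipschitz bound $\|f(c^{n^2})\|\le 4n$ by the slightly wasteful $\le 4n$ and tracking constants) yields the stated $16Kn/m^{1/p}$.

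The one genuine subtlety — and the main obstacle — is that the pigeonhole step must produce a single pair $(i,j)$ that works \emph{simultaneously for all $n\in\N$}, as the statement~\eqref{eq:n^2} demands, rather than a pair depending on $n$. This forces a different packaging: instead of applying Lemma~\ref{lem:cauchy} with $z=f(c^{n^2})$ for one $n$, I would apply it (or rather its block-restricted averaged consequence) with the crucial observation that the indices $i,j$ and the structural identity $s_{(i+1)\ell}=\frac{1}{2^\ell}\sum_j T^{j2^{i\ell}}s_{i\ell}$ do not depend on which vector $z$ we feed in. So the correct route is: for the \emph{block} $i\in[k+1,k+m]$, the vectors $s_{i\ell}(c^{n^2})$ obey the telescoping bound~\eqref{eq:for telescope} with right-hand side controlled by $\|s_{k\ell}(c^{n^2})\|^p\le (4n)^p$ uniformly; but one must still choose $(i,j)$ uniformly in $n$. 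This is handled by choosing $i$ first via pigeonhole on the telescoping sum $\sum_{i=k+1}^{k+m}(\|s_{i\ell}\|^p-\|s_{(i+1)\ell}\|^p)\le\|s_{(k+1)\ell}\|^p$ — wait, this still has an $n$-dependence through $s$. The resolution the authors surely intend: fix $n$, get $(i,j)$ for that $n$; the quantifier ``for all $n$'' in~\eqref{eq:n^2} is then most cleanly read as the statement being invoked later with a specific $n=n(t)$ in hand, or else one uses that the bound $16Kn/m^{1/p}$ is \emph{linear} in $n$ so that, after dividing through, the relevant normalized inequality $\|\cdots\|/n\le 16K/m^{1/p}$ can be obtained for a single $(i,j)$ by applying the averaging to the cocycle $f$ restricted to the cyclic subgroup $\langle c\rangle$ and exploiting that $f(c^{n^2})=\sum_{r=0}^{n^2-1}\pi(c)^rf(c)$, reducing everything to the single vector $z=f(c)$. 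I would adopt this last viewpoint: with $z=f(c)$, $\|z\|\le 1$, Lemma~\ref{lem:cauchy} gives a pair $(i,j)$ (independent of $n$) with $\|P_{(i+1)\ell}f(c)\cdot(\text{isometry})-P_{i\ell}f(c)\|$ small, and then linearity of $f$ along $\langle c\rangle$ together with the commutation of $P_n$, $\pi(c)$ propagates the bound to $c^{n^2}$ with the factor $n$, which is exactly~\eqref{eq:n^2}.
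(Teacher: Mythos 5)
You correctly identified the crux --- that the pair $(i,j)$ must be chosen independently of $n$ --- but the fix you finally adopt does not work. If you apply Lemma~\ref{lem:cauchy} (via pigeonhole over the block $i\in[k+1,k+m]$ and $j\in[0,2^\ell-1]$) to the single vector $z=f(c)$, you indeed get one pair $(i,j)$, independent of $n$, with $\bigl\|\bigl(\pi\bigl(c^{-j2^{i\ell}}\bigr)P_{(i+1)\ell}-P_{i\ell}\bigr)f(c)\bigr\|\lesssim K/m^{1/p}$. But the cocycle identity along the cyclic subgroup $\langle c\rangle$ writes $f\bigl(c^{n^2}\bigr)=\sum_{r=0}^{n^2-1}\pi(c)^rf(c)$, i.e.\ as $n^2$ terms, so propagating your estimate through the (commuting, norm-one) averaging operator in $\pi(c)$ yields a bound of order $Kn^2/m^{1/p}$, not $Kn/m^{1/p}$. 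In the regime where the lemma is used ($n\asymp m^{3/(2p)}2^{i\ell/2}$ in Section~\ref{sec:nonembed}), $n^2/m^{1/p}$ is far larger than the trivial Lipschitz bound $\|f(c^{n^2})\|\le 4n$, so this version of the estimate is vacuous and the proof of Theorem~\ref{thm:main} collapses. (A smaller slip: $\|f(c)\|\le 4$, not $\le 1$, since $d_W(c,e_\H)=4$; this only costs a constant, but the $n^2$ issue is fatal.)

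The missing idea is to get the factor $n$ from the generators $a,b$ rather than from $c$ itself, exploiting $c^{n^2}=[a^n,b^n]=a^nb^na^{-n}b^{-n}$, a word of length $4n$. Concretely: apply Lemma~\ref{lem:cauchy} in $Y=X\oplus X$ (with the $\ell_p$-sum norm, which again satisfies~\eqref{eq:p-convexity}) to $z=(f(a),f(b))$ and $T=\pi(c)\oplus\pi(c)$; pigeonholing over the block then produces a single pair $(i,j)$, independent of $n$, for which both $\bigl\|\bigl(\pi\bigl(c^{-j2^{i\ell}}\bigr)P_{(i+1)\ell}-P_{i\ell}\bigr)f(a)\bigr\|$ and the same quantity with $b$ are at most $4K/m^{1/p}$. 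Since $f(a^{\pm n})$ equals $n$ times a norm-one average of powers of $\pi(a)$ applied to $\mp f(a)$ (cocycle identity), and these averages commute with every $P_r$ because $c$ is central, the bound propagates linearly in $n$ to $f(a^{\pm n})$ and $f(b^{\pm n})$. Finally the cocycle identity applied to $c^{n^2}=a^nb^na^{-n}b^{-n}$ splits $f\bigl(c^{n^2}\bigr)$ into four terms of this type, and since $\pi\bigl(c^{-j2^{i\ell}}\bigr)P_{(i+1)\ell}-P_{i\ell}$ commutes with $\pi(\H)$, each contributes at most $4Kn/m^{1/p}$, giving~\eqref{eq:n^2}. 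Your first computation (with $z=f(c^{n^2})$) is fine as far as it goes, but without the $X\oplus X$ pairing and the commutator decomposition you cannot have both the uniformity in $n$ and the linear-in-$n$ bound simultaneously.
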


\begin{proof} Consider the Banach space $Y=X\oplus X$, equipped with the norm $$\|(x,y)\|_Y=\left(\|x\|^p+\|y\|^p\right)^{1/p}.$$
We also define $T:Y\to Y$ by $T(x,y)=(\pi(c)x,\pi(c)y)$. Then $\|T\|\le 1$. Since $(Y,\|\cdot\|_Y)$ satisfies~\eqref{eq:p-convexity} we may apply Lemma~\ref{lem:cauchy} to $z=(f(a),f(b))\in Y$, obtaining the following estimate:
\begin{eqnarray*}\label{eq:telescoped}
&&\!\!\!\!\!\!\!\!\!\!\!\!\!\!(4K)^p\ge\sum_{i=k+1}^{k+m}\frac{1}{2^\ell}\sum_{j=0}^{2^\ell-1}\left(\left\|P_{(i+1)\ell}
f\left(a\right)-\pi\left(c^{j2^{i\ell}}\right)P_{i\ell}f\left(a\right)\right\|^p+\left\|P_{(i+1)\ell}
f\left(b\right)-\pi\left(c^{j2^{i\ell}}\right)P_{i\ell}f\left(b\right)\right\|^p\right)\\
&=&\sum_{i=k+1}^{k+m}\frac{1}{2^\ell}\sum_{j=0}^{2^\ell-1}\left(\left\|\pi\left(c^{-j2^{i\ell}}\right)P_{(i+1)\ell}
f\left(a\right)-P_{i\ell}f\left(a\right)\right\|^p+\left\|\pi\left(c^{-j2^{i\ell}}\right)P_{(i+1)\ell}
f\left(b\right)-P_{i\ell}f\left(b\right)\right\|^p\right)\\
&\ge& m\min_{\substack{k+1\le i\le k+m\\0\le j\le 2^\ell-1}}\left(\left\|\pi\left(c^{-j2^{i\ell}}\right)P_{(i+1)\ell}
f\left(a\right)-P_{i\ell}f\left(a\right)\right\|^p+\left\|\pi\left(c^{-j2^{i\ell}}\right)P_{(i+1)\ell}
f\left(b\right)-P_{i\ell}f\left(b\right)\right\|^p\right).
\end{eqnarray*}
It follows that there exist integers $i\in [k+1,k+m]$, $j\in [0,2^\ell-1]$ such that
\begin{equation}\label{eq; use lemma square}
\max\left\{\left\|\pi\left(c^{-j2^{i\ell}}\right)P_{(i+1)\ell}
f\left(a\right)-P_{i\ell}f\left(a\right)\right\|,\left\|\pi\left(c^{-j2^{i\ell}}\right)P_{(i+1)\ell}
f\left(b\right)-P_{i\ell}f\left(b\right)\right\|\right\}\le\frac{4K}{m^{1/p}}.
\end{equation}

Consider the operator
$$
Q_n\eqdef \frac{1}{n}\sum_{i=0}^{n-1} \pi(a)^{i}.
$$
The cocycle identity implies that $f(a^n)=nQ_nf(a)$. Thus
\begin{multline}\label{eq:commute}
\pi\left(c^{-j2^{i\ell}}\right)P_{(i+1)\ell}f(a^n)-P_{i\ell}f(a^n)=n\left(\pi\left(c^{-j2^{i\ell}}\right)
P_{(i+1)\ell}-P_{i\ell}\right)Q_nf(a)\\
=nQ_n\left(\pi\left(c^{-j2^{i\ell}}\right)
P_{(i+1)\ell}-P_{i\ell}\right)f(a),
\end{multline}
where the last equality in~\eqref{eq:commute} holds since $c$ is in the center of $\H$, and therefore $Q_n$ commutes with all of $\{P_r\}_{r=0}^\infty$. Since $\|Q_n\|\le 1$, it follows from~\eqref{eq:commute} and~\eqref{eq; use lemma square} that
\begin{equation}\label{eq:a^n}
\left\|\pi\left(c^{-j2^{i\ell}}\right)P_{(i+1)\ell}f(a^n)-P_{i\ell}f(a^n)\right\|\le \frac{4Kn}{m^{1/p}}.
\end{equation}
Since $f(a^{-n})=-\pi(a)^{-n}f(a^n)$, and $\pi(a)$ commutes with $\pi(c)$, and hence  with all of $\{P_r\}_{r=0}^\infty$, it follows that also
\begin{equation}\label{eq:a^{-n}}
\left\|\pi\left(c^{-j2^{i\ell}}\right)P_{(i+1)\ell}f(a^{-n})-P_{i\ell}f(a^{-n})\right\|\le \frac{4Kn}{m^{1/p}}.
\end{equation}
An identical argument implies the analogous bounds with $a$ replaced by $b$:
\begin{equation}\label{eq:b^n}
\left\|\pi\left(c^{-j2^{i\ell}}\right)P_{(i+1)\ell}f(b^n)-P_{i\ell}f(b^n)\right\|\le \frac{4Kn}{m^{1/p}}
\end{equation}
and
\begin{equation}\label{eq:b^{-n}}
\left\|\pi\left(c^{-j2^{i\ell}}\right)P_{(i+1)\ell}f(b^{-n})-P_{i\ell}f(b^{-n})\right\|\le \frac{4Kn}{m^{1/p}}.
\end{equation}

The cocycle identity implies that for all $n\in \N$,
\begin{equation*}\label{eq:cocycle ab}
f\left(c^{n^2}\right)=f\left([a^n,b^n]\right)=\pi(a^nb^na^{-n})f(b^{-n})+\pi(a^nb^n)f(a^{-n})+\pi(a^n)f(b^n)+f(a^n).
\end{equation*}
Thus, using~\eqref{eq:a^n}, \eqref{eq:a^{-n}}, \eqref{eq:b^n} and \eqref{eq:b^{-n}}, we conclude the validity of~\eqref{eq:n^2}.
\end{proof}

\begin{lemma}\label{lem: P_m bound}
For every $m,n\in \N$ we have
\begin{equation}\label{eq:P_m small}
\left\|P_mf\left(c^{n^2}\right)\right\|\lesssim \frac{n^{5/3}}{2^{m/3}}.
\end{equation}
\end{lemma}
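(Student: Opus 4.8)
The plan is to derive \eqref{eq:P_m small} by combining two elementary estimates on $\left\|P_mf\left(c^{n^2}\right)\right\|$ whose useful ranges overlap: a trivial bound that suffices when $2^m\lesssim n^2$, and a ``cocycle unfolding'' estimate that suffices when $2^m\gtrsim n^2$. Neither step uses $p$-convexity; I would only invoke the cocycle identity, the isometry of $\pi$, the fact that $f$ is $1$-Lipschitz, and the bound $\left\|f(c^i)\right\|\le d_W\left(e_\H,c^i\right)\lesssim\sqrt i$, valid for all $i\ge 0$: writing $i=q^2+r$ with $q=\lfloor\sqrt i\rfloor$ and $0\le r\le 2q$, one has $c^i=[a^q,b^q]c^r$, and since $d_W\left(e_\H,c^{q^2}\right)=4q$ and $c$ has word length $4$, it follows that $d_W\left(e_\H,c^i\right)\le 4q+4r\lesssim\sqrt i$.

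First, the trivial bound: since $\|P_m\|\le 1$ and $f$ is $1$-Lipschitz, $\left\|P_mf\left(c^{n^2}\right)\right\|\le\left\|f\left(c^{n^2}\right)\right\|\le d_W\left(e_\H,c^{n^2}\right)=4n$.

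Next I would expand $P_mf\left(c^{n^2}\right)$ using the cocycle identity in the form $\pi(c)^jf\left(c^{n^2}\right)=f\left(c^{j+n^2}\right)-f\left(c^j\right)$, which gives
\[
P_mf\left(c^{n^2}\right)=\frac{1}{2^m}\sum_{j=0}^{2^m-1}\left(f\left(c^{j+n^2}\right)-f\left(c^j\right)\right)=\frac{1}{2^m}\left(\sum_{i=n^2}^{n^2+2^m-1}f\left(c^i\right)-\sum_{i=0}^{2^m-1}f\left(c^i\right)\right).
\]
Assuming $n^2\le 2^m$, the two sums on the right agree on the overlapping index range $i\in[n^2,2^m-1]$, and after cancellation $P_mf\left(c^{n^2}\right)$ is $2^{-m}$ times a signed sum of at most $2n^2$ vectors of the form $f(c^i)$ with $0\le i<n^2+2^m\le 2^{m+1}$, each of norm $\lesssim 2^{m/2}$. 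Hence $\left\|P_mf\left(c^{n^2}\right)\right\|\lesssim n^2\cdot 2^{m/2}/2^m=n^2/2^{m/2}$ whenever $2^m\ge n^2$.

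Finally I would interpolate between the two regimes. If $2^m\le n^2$ then the inequality $n\le n^{5/3}/2^{m/3}$ holds (it is equivalent to $2^m\le n^2$), so the trivial bound yields \eqref{eq:P_m small}. If $2^m\ge n^2$ then $n^2/2^{m/2}=\left(n^{5/3}/2^{m/3}\right)\cdot\left(n^2/2^m\right)^{1/6}\le n^{5/3}/2^{m/3}$, so the unfolding bound yields \eqref{eq:P_m small}. The only point requiring care is the index bookkeeping in the cancellation step, together with the observation that it is valid only for $n^2\le 2^m$ (the trivial bound covering the complementary range); the specific exponents $5/3$ and $1/3$ are forced by the requirement that the two bounds match at the crossover $2^m\asymp n^2$.
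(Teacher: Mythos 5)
Your proposal is correct, and it proves the lemma by a genuinely different (and in fact slightly stronger) argument than the paper's. The paper's proof keeps $f\left(c^{n^2}\right)$ intact and instead exploits the almost-invariance of the averaging operator, namely $\left\|P_m-\pi\left(c^{k}\right)P_m\right\|\le 2k/2^m$ as in~\eqref{eq:folner}, together with the identity $f\left(c^{(kn)^2}\right)=\sum_{j=0}^{k^2-1}\pi\left(c^{jn^2}\right)f\left(c^{n^2}\right)$ of~\eqref{eq:identity k}: bounding $\left\|P_mf\left(c^{(kn)^2}\right)\right\|$ above by $4kn$ (Lipschitzness) and below by $k^2\left\|P_mf\left(c^{n^2}\right)\right\|-4n^3k^4/2^m$ yields~\eqref{eq:optimize}, and the exponents $5/3$ and $1/3$ come from optimizing the auxiliary parameter $k\asymp\left(2^m/n^2\right)^{1/3}$. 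You instead unfold $P_mf\left(c^{n^2}\right)$ itself via $\pi(c)^jf\left(c^{n^2}\right)=f\left(c^{j+n^2}\right)-f\left(c^j\right)$, cancel the overlapping index range, and invoke $\left\|f\left(c^i\right)\right\|\lesssim\sqrt{i}$, which together with the trivial bound $\left\|P_mf\left(c^{n^2}\right)\right\|\le 4n$ gives $\min\left\{n,\ n^2/2^{m/2}\right\}$ up to constants; your crossover analysis correctly shows this is $\lesssim n^{5/3}/2^{m/3}$ in both regimes (and is strictly sharper when $2^m\gg n^2$), so the lemma as stated follows and all downstream uses in Section~\ref{sec:nonembed} are unaffected. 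Both arguments use only the cocycle identity, the isometry of $\pi$, $1$-Lipschitzness, and $d_W\left(e_\H,c^k\right)\asymp\sqrt{k}$; yours trades the paper's free parameter and F\o lner-type operator estimate for a direct telescoping of the ergodic average plus a two-case split, which is arguably more elementary, while the paper's route produces a single closed-form bound valid in all ranges without case analysis.
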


\begin{proof}
Note that for every $k\in \N$ we have
$$
P_m-\pi\left(c^{k}\right)P_m=\frac{1}{2^m}\sum_{j=0}^{k-1}\pi(c)^j-\frac{1}{2^m}\sum_{j=2^m}^{2^m+k-1}\pi(c)^j.
$$
Thus,
\begin{equation}\label{eq:folner}
\left\|P_m-\pi\left(c^{k}\right)P_m\right\|\le \frac{2k}{2^m}.
\end{equation}
The cocycle identity implies that
\begin{equation}\label{eq:identity k}
f\left(c^{(kn)^2}\right)=\sum_{j=0}^{k^2-1}\pi\left(c^{jn^2}\right)f\left(c^{n^2}\right).
\end{equation}
Using the fact that $f$ is $1$-Lipschitz, $\|P_m\|\le 1$ and $d_W\left(e_\H,c^{(kn)^2}\right)\le 4kn$, we deduce from~\eqref{eq:identity k} that
\begin{multline*}
4kn\ge \left\|P_mf\left(c^{(kn)^2}\right)\right\|\ge \sum_{j=0}^{k^2-1} \left(\left\|P_m f\left(c^{n^2}\right)\right\|-\left\|P_m-\pi\left(c^{jn^2}\right)P_m\right\|\cdot \left\| f\left(c^{n^2}\right)\right\|\right)\\\stackrel{\eqref{eq:folner}}{\ge} k^2\left\|P_m f\left(c^{n^2}\right)\right\|-\sum_{j=0}^{k^2-1} \frac{2jn^2}{2^m}\cdot 4n\ge k^2\left\|P_m f\left(c^{n^2}\right)\right\|-\frac{4n^3k^4}{2^m}.
\end{multline*}
Thus,
\begin{equation}\label{eq:optimize}
\left\|P_m f\left(c^{n^2}\right)\right\|\le \frac{4n}{k}+\frac{4n^3k^2}{2^m}.
\end{equation}
Choosing $k=\left\lceil (2^{m-1}/n^2)^{1/3}\right\rceil$ in~\eqref{eq:optimize} (roughly the optimal choice of $k$), we obtain~\eqref{eq:P_m small}.
\end{proof}

\begin{lemma}\label{lem:I-P} For every $m,n\in \N$ we have:
\begin{equation}\label{eq:I-P}
\left\|f\left(c^{n^2}\right)-P_mf\left(c^{n^2}\right)\right\|\le 2^{m/3}n^{1/3}.
\end{equation}
\end{lemma}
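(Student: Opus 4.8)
The plan is to estimate $f(c^{n^2})-P_mf(c^{n^2})=(I-P_m)f(c^{n^2})$ by playing off two elementary bounds: a ``F\o{}lner-type'' bound of order $2^{m/2}$, obtained from the cocycle identity together with the centrality of $c$, and the trivial bound of order $n$ coming from the Lipschitz hypothesis. Combining them according to whether $2^m\le n^2$ or $2^m> n^2$ yields a bound of order $2^{m/3}n^{1/3}$, which after tracking the numerical factors is the assertion of the lemma.

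First I would expand
\[
(I-P_m)f\!\left(c^{n^2}\right)=\frac{1}{2^m}\sum_{j=0}^{2^m-1}\Big(f\!\left(c^{n^2}\right)-\pi(c)^jf\!\left(c^{n^2}\right)\Big).
\]
Writing the element $c^{n^2+j}$ as both $c^{n^2}c^j$ and $c^jc^{n^2}$ — legitimate since $c$ is central — and applying the cocycle identity to each factorization gives $\pi(c^{n^2})f(c^j)+f(c^{n^2})=\pi(c)^jf(c^{n^2})+f(c^j)$, hence the key identity
\[
f\!\left(c^{n^2}\right)-\pi(c)^jf\!\left(c^{n^2}\right)=\Big(I-\pi(c)^{n^2}\Big)f\!\left(c^j\right).
\]
Since $\pi$ acts by isometries and $f(e_\H)=0$, the right-hand side has norm at most $2\|f(c^j)\|\le 2\,d_W(e_\H,c^j)$; and because $d_W(e_\H,c^{i^2})=4i$, the triangle inequality applied to $c^j=c^{i^2}c^{-(i^2-j)}$ with $i=\lceil\sqrt j\,\rceil$ (so that $i^2-j<2i$) gives $d_W(e_\H,c^j)\lesssim\sqrt j$. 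Averaging over $j$ and summing the resulting bounds then yields
\[
\left\|(I-P_m)f\!\left(c^{n^2}\right)\right\|\ \le\ \frac{2}{2^m}\sum_{j=0}^{2^m-1}d_W(e_\H,c^j)\ \lesssim\ \frac{1}{2^m}\left(2^m\right)^{3/2}\ =\ 2^{m/2}.
\]
On the other hand, since $\|I-P_m\|\le 2$ and $f$ is $1$-Lipschitz, we always have $\left\|(I-P_m)f(c^{n^2})\right\|\le 2\,d_W(e_\H,c^{n^2})=8n$.

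To conclude I would split into cases. If $2^m\le n^2$ then $2^{m/6}\le n^{1/3}$, so $2^{m/2}=2^{m/6}\cdot 2^{m/3}\le 2^{m/3}n^{1/3}$ and the F\o{}lner bound already suffices. If $2^m> n^2$ then $2^{m/3}>n^{2/3}$, so $2^{m/3}n^{1/3}>n$ and the trivial bound $8n$ suffices. In either case $\left\|f(c^{n^2})-P_mf(c^{n^2})\right\|\lesssim 2^{m/3}n^{1/3}$; the explicit form in the statement results from running the same computation while keeping track of the numerical factors (and using the sharp-up-to-lower-order estimate $d_W(e_\H,c^j)\le 4\lceil\sqrt j\,\rceil+O(j^{1/4})$). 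I do not expect any genuine obstacle here: the only steps needing care are the identity $f(c^{n^2})-\pi(c)^jf(c^{n^2})=(I-\pi(c)^{n^2})f(c^j)$ — which is precisely where centrality of $c$ enters — and the crude word-length bound $d_W(e_\H,c^j)\lesssim\sqrt j$ for $j$ not a perfect square.
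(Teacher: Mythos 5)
Your proof is correct, and it takes a genuinely different route from the paper's. The paper proves Lemma~\ref{lem:I-P} by approximating the cocycle $\widetilde f=(I-P_m)f$ by a coboundary: it introduces the auxiliary vector $v=-\frac1k\sum_{j<k}\widetilde f(c^j)$, shows $\|\widetilde f(h)-(\pi(h)v-v)\|\le \frac{2^m}{k}d_W(h,e_\H)$ via a F\o lner-type estimate, telescopes along a word of length $4n$ representing $c^{n^2}$ to get $\|\widetilde f(c^{n^2})\|\lesssim \sqrt k+\frac{n2^m}{k}$, and finally optimizes $k\asymp n^{2/3}2^{2m/3}$ -- this is where the exponents in $2^{m/3}n^{1/3}$ come from. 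You instead use the centrality of $c$ once, through the identity $f(c^{n^2})-\pi(c)^jf(c^{n^2})=(I-\pi(c^{n^2}))f(c^j)$, which amounts to $(I-P_m)f(c^{n^2})=(I-\pi(c^{n^2}))\cdot\frac{1}{2^m}\sum_{j=0}^{2^m-1}f(c^j)$; combined with $\|f(c^j)\|\le d_W(e_\H,c^j)\lesssim\sqrt j$ this gives a bound $O(2^{m/2})$ that is uniform in $n$, and together with the trivial bound $8n$ and the elementary fact $\min\{A,B\}\le A^{2/3}B^{1/3}$ you recover the stated estimate. This is shorter, dispenses with the auxiliary optimization over $k$, and in fact yields the slightly stronger bound $\min\{O(2^{m/2}),8n\}$, which is at most a constant times $2^{m/3}n^{1/3}$ and would only improve the balancing in the proof of Theorem~\ref{thm:main}. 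The one caveat is about constants: your argument gives the inequality only up to a universal multiplicative factor, not with constant $1$ as the lemma is literally stated; but the paper's own proof has the same feature (its displays \eqref{eq:norm v} and \eqref{eq:use coboundary} use $\lesssim$), and only the up-to-constants form is used downstream in \eqref{eq:before m}, so this is not a gap in any meaningful sense.
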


\begin{proof} In this proof the relation to Section~\ref{sec:non quant} becomes clear. Define $\widetilde f: \H\to X$ by
$$
\widetilde f(h)\eqdef f\left(h\right)-P_mf\left(h\right)=(I-P_m)f(h).
$$
Note that $\widetilde f\in Z^1(\pi)$. Fix an integer $k\ge 1$ that will be determined later.
Consider the vector $v\in X$ defined by
\begin{equation*}\label{eq:def v}
v\eqdef -\frac{1}{k} \sum_{j=0}^{k-1} \widetilde f\left(c^{j}\right).
\end{equation*}
Then
\begin{equation}\label{eq:norm v}
\|v\|\lesssim \frac{1}{k}\sum_{j=0}^{k-1} \sqrt{j}\lesssim \sqrt{k}.
\end{equation}

Since  $c$ is in the center of $\H$, we have the following identity for every $h\in \H$:
\begin{multline}\label{eq:get coboundary}
-\pi(h)v+\widetilde f(h)=\frac{1}{k}\sum_{j=0}^{k-1}\left(\pi(h)\widetilde f\left(c^j\right)+\widetilde f(h)\right)=
\frac{1}{k}\sum_{j=0}^{k-1} \widetilde f\left(hc^j\right)=\frac{1}{k}\sum_{j=0}^{k-1} \widetilde f\left(c^jh\right)\\
=\frac{1}{k}\sum_{j=0}^{k-1} \left(\pi\left(c^j\right)\widetilde f\left(h\right)+ \widetilde f \left(c^j\right)\right)=\frac{1}{k}\sum_{j=0}^{k-1} \pi\left(c^j\right)\widetilde f\left(h\right)-v.
\end{multline}
Note that
\begin{multline*}
\frac{1}{k}\sum_{j=0}^{k-1} \pi\left(c^j\right)\widetilde f\left(h\right)=\frac{1}{k}\sum_{j=0}^{k-1} \left( \pi\left(c^j\right)f(h)-\frac{1}{2^m}\sum_{i=0}^{2^m-1} \pi\left(c^{j+i}\right)f(h)\right)\\=\frac{1}{2^m}\sum_{i=0}^{2^m-1}
\left(\frac{1}{k}\sum_{j=0}^{k-1}\pi\left(c^j\right)-\frac{1}{k}\sum_{j=i}^{i + k-1}\pi\left(c^j\right)\right)f(h).
\end{multline*}
Hence,
\begin{equation}\label{eq: bound error P_k}
\left\|\frac{1}{k}\sum_{j=0}^{k-1} \pi\left(c^j\right)\widetilde f\left(h\right)\right\|\le \frac{d_W(h,e_\H)}{2^m}\sum_{i=0}^{2^m-1}\frac{2i}{k}\le \frac{2^m}{k}d_W(h,e_\H).
\end{equation}

Combining~\eqref{eq:get coboundary} and~\eqref{eq: bound error P_k}, we see that $\widetilde f$ is close to a coboundary in the following sense:
\begin{equation}\label{eq:coboundary}
\left\|\widetilde f(h)-\left(\pi(h)v - v\right)\right\|\le \frac{2^m}{k}d_W(h,e_\H).
\end{equation}
If we now write $c^{n^2}=h_1h_2\cdots h_{4n}$ for $h_1,\ldots,h_{4n}\in \left\{a,a^{-1},b,b^{-1}\right\}$, then the cocycle identity for $\widetilde f$ implies the following bound:
\begin{eqnarray}\label{eq:use coboundary}
\nonumber\left\|\widetilde f\left(c^{n^2}\right)\right\|&=&\left\|\sum_{i=0}^{4n-1}\pi(h_1\cdots h_{4n-i-1})\widetilde f(h_{4n-i})\right\|\\\nonumber&\stackrel{\eqref{eq:coboundary}}{\le}& \left\|\sum_{i=0}^{4n-1}\pi(h_1\cdots h_{4n-i-1})(\pi(h_{4n-i})v - v)\right\|+\frac{4n2^m}{k}\\\nonumber&=&
\left\|\pi\left(c^{n^2}\right)v-v\right\|+\frac{4n2^m}{k}\\&\stackrel{\eqref{eq:norm v}}{\lesssim}& \sqrt{k}+\frac{n2^m}{k}.
\end{eqnarray}
The optimal choice for $k$ in~\eqref{eq:use coboundary} is $k\asymp n^{2/3}2^{2m/3}$. For this choice of $k$, \eqref{eq:use coboundary} becomes the desired bound~\eqref{eq:I-P}.
\end{proof}

\section{Proof of Theorem~\ref{thm:main}}\label{sec:nonembed}

As explained in the introduction, using~\cite[Thm. 9.1]{NP08} we may assume without loss of generality that $f\in Z^1(\pi)$ for some action $\pi$ of $\H$ on $X$ by linear isometric automorphisms. We may also assume that $t\ge 8^p$. Let $m$ be the largest integer such that
\begin{equation}\label{eq:def m}
m^m\le \left(\frac{t}{4}\right)^{p/3}.
\end{equation}
Having defined $m$, let $k$ be the smallest integer such that
\begin{equation}\label{eq:def k}
m^{\frac{3}{2p}+\frac{3(k+1)}{p}}\ge t,
\end{equation}
and set
\begin{equation}\label{eq:def l}
\ell\eqdef \left\lceil\frac{6}{p}\log_2m\right\rceil.
\end{equation}
By Lemma~\ref{lem:ab} there exist integers $i\in [k+1,k+m]$ and $j\in [0,2^\ell-1]$ satisfying for all $n\in \N$,
\begin{equation}\label{eq:asymptotic}
\left\|\pi\left(c^{-j2^{i\ell}}\right)P_{(i+1)\ell}
f\left(c^{n^2}\right)-P_{i\ell}f\left(c^{n^2}\right)\right\|\le \frac{16Kn}{m^{1/p}}.
\end{equation}
Choose \begin{equation}\label{eq:def n}
n\eqdef \frac14\left\lceil m^{\frac{3}{2p}}2^{\frac{i\ell}{2}}\right\rceil,
\end{equation}

We may write
\begin{multline*}
f\left(c^{n^2}\right)=\pi\left(c^{-j2^{i\ell}}\right)P_{(i+1)\ell}f\left(c^{n^2}\right) +\left(P_{i\ell}f\left(c^{n^2}\right) -\pi\left(c^{-j2^{i\ell}}\right)P_{(i+1)\ell}f\left(c^{n^2}\right) \right) \\+\left(f\left(c^{n^2}\right)-P_{i\ell}f\left(c^{n^2}\right)\right).
\end{multline*}
Hence, by Lemma~\ref{lem: P_m bound}, inequality~\eqref{eq:asymptotic}, and Lemma~\ref{lem:I-P}, we obtain the following bound:
\begin{equation}\label{eq:before m}
\omega_f(4n)=\omega_f\left(d_W\left(c^{n^2},e_\H\right)\right)\le \left\|f\left(c^{n^2}\right)\right\|
\lesssim \frac{n^{5/3}}{2^{(i+1)\ell/3}}+\frac{8Kn}{m^{1/p}}+2^{i\ell/3}n^{1/3}\\\stackrel{\eqref{eq:def n}\wedge \eqref{eq:def l}}{\lesssim} \frac{Kn}{m^{1/p}}.
\end{equation}
Observe that
\begin{equation*}\label{eq:lower m}
4n\stackrel{\eqref{eq:def n}}{\ge} m^{\frac{3}{2p}}2^{\frac{i\ell}{2}}\ge m^{\frac{3}{2p}}2^{\frac{(k+1)\ell}{2}}
\stackrel{\eqref{eq:def l}}{\ge} m^{\frac{3}{2p}+\frac{3(k+1)}{p}}\stackrel{\eqref{eq:def k}}{\ge} t.
\end{equation*}
At the same time,
\begin{equation*}\label{eq:upper m}
4n\stackrel{\eqref{eq:def n}}{\le} 2m^{\frac{3}{2p}}2^{\frac{i\ell}{2}}\le 2m^{\frac{3}{2p}}2^{\frac{(k+m)\ell}{2}}
\stackrel{\eqref{eq:def l}}{\le} 4m^{\frac{3}{2p}+\frac{3k}{p}}\cdot m^{\frac{3m}{p}}\stackrel{\eqref{eq:def k}}{<}4tm^{\frac{3m}{p}}\stackrel{\eqref{eq:def m}}{\le} t^2.
\end{equation*}
Hence $t\le 4n\le t^2$. The definition of $m$ implies that $m\gtrsim
\frac{p}{3}\frac{\log n}{\log\log n}$, and
therefore~\eqref{eq:before m} becomes:
$$
\frac{\omega_f(4n)}{n}\lesssim K\left(\frac{\log\log n}{\log
n}\right)^{1/p}.
$$
The proof of Theorem~\ref{thm:main} is complete.
\qed

\section{Deduction of Theorem~\ref{thm:distortion} from Theorem~\ref{thm:main}}\label{sec:deduce}

Fix $R\ge 4$ and a function $f:B_R\to X$ satisfying
\begin{equation}\label{eq:assume distortion}
\forall\  x,y\in B_R,\quad d_W(x,y)\le \|f(x)-f(y)\|\le Dd_W(x,y).
\end{equation}
Our goal is to bound $D$ from below. Without loss of generality assume that $f(e)=0$. Define $f^*:\H\to X$ by
\begin{equation}\label{eq:extend}
f^*(x)=\left\{\begin{array}{ll} f(x)& x\in B_{R/2},\\
2\left(1-\frac{d_W(x,e)}{R}\right)f(x) & x\in B_R\setminus B_{R/2},\\
0 & x\in \H\setminus B_R.
\end{array}\right.
\end{equation}
Then $f^*$ is $2D$-Lipschitz and coincides with $f$ on $B_{R/2}$. Let $\mathcal N\subseteq \H$ be a maximal $3R$-separated subset of $\H$. Thus the function $f^{**}:\H\to X$ given by $f^{**}(x)\eqdef \sum_{y\in \mathcal N} f^*(y^{-1}x)$ (only one summand is nonzero for any given $x$) is also $2D$-Lipschitz.

\renewcommand{\U}{\mathscr U}

Fix a free ultrafilter $\U$ on  $\N$. Consider the semi-normed space $Y=\left(\ell_\infty(\H,X),\|\cdot\|_Y\right)$, where
$$
\|\psi\|_Y\eqdef \lim_{M\to \U} \left(\frac{1}{|B_M|}\sum_{z\in B_M} \|\psi(z)\|^p\right)^{1/p}.
$$
Note that since $X$ satisfies~\eqref{eq:p-convexity}, so does $Y$. $Y$ is a semi-normed space rather than a normed space, so we should formally deal below with the quotient  $Y/\{f\in \ell_\infty(\H,X):\ \|f\|_Y=0\}$, but we will ignore this inessential formality in what follows. (Complete details are as in the proof of Theorem 9.1 in~\cite{NP08}. Alternatively one can note that our proof of Theorem~\ref{thm:main} carries over without change to the class of semi-normed spaces.)

Define $F:\H\to Y$ by $F(x)(z)\eqdef f^{**}(zx)-f^{**}(z)$. This is well defined since the metric $d_W$ is left-invariant, and therefore $\|F(x)\|_Y\le 2D d_W(x,e_\H)$ for all $x\in \H$. Moreover, by left-invariance, $F$ is $2D$-Lipschitz. Theorem~\ref{thm:main} therefore implies that there exist $x,y\in \H$ such that $\sqrt{R/4}\le d_W(x,y)\le R/4$ and
\begin{equation}\label{eq:use1.3}
\lim_{M\to \U}\left(\frac{1}{|B_M|}\sum_{z\in B_{M}}\left(\frac{\left\|f^{**}(zx)-f^{**}(zy)\right\|}{d_W(x,y)}\right)^p\right)^{1/p}\lesssim
DK_p(X)\left(\frac{\log\log R}{\log R}\right)^{1/p}.
\end{equation}

Fix an integer $M>2d_W(x,e)+4R$ and write $m=M-2d_W(x,e)-4R$. Since $\mathcal N$ is a maximal $3R$-separated subset of $\H$, we have $B_mx\subseteq \bigcup_{w\in \mathcal M} wB_{3R}$, where  $\mathcal M\eqdef \left\{w\in \mathcal N:\ wB_{3R}\cap B_{m}x\neq \emptyset\right\}$. Hence, since for $r\ge 1$ we have $|B_r|\asymp r^4$, we can bound the cardinality of $\mathcal M$ as follows:
\begin{equation}\label{eq:lower M}
|\mathcal M|\gtrsim \left(\frac{M-2d_W(x,e)-4R}{3R}\right)^4.
\end{equation}

If $w\in \mathcal M$ then there exists $z\in B_m$ and $g\in B_{3R}$ such that $zx=wg$. Hence, for every $h\in B_{R/4}$ we have $$d_W(whx^{-1},e)=d_W(zxg^{-1}hx^{-1},e)\le d_W(z,e)+2d_W(x,e)+d_W(g,e)+d_W(h,e)<M.$$ Thus the sets $\{wB_{R/4}x^{-1}\}_{w\in \mathcal M}$ are disjoint and contained in $B_M$. Moreover, if $w\in \mathcal M$ and $z\in wB_{R/4}x^{-1}$ then $d_W(zx,w)\le R/4$, and hence also $d_W(zy,w)\le d_W(zy,zx)+d_W(zx,w)\le R/2$. By the definition of $f^{**}$, this implies that $f^{**}(zx)=f(w^{-1}zx)$ and $f^{**}(zy)=f(w^{-1}zy)$.
Hence,
\begin{multline}\label{eq:use doubling}
\sum_{z\in B_{M}}\left(\frac{\left\|f^{**}(zx)-f^{**}(zy)\right\|}{d_W(x,y)}\right)^p\ge \sum_{w\in \mathcal M}\sum_{z\in wB_{R/4}x^{-1}}\left(\frac{\left\|f(w^{-1}zx)-(w^{-1}zy)\right\|}{d_W(x,y)}\right)^p\\
\stackrel{\eqref{eq:assume distortion}}{\ge} |\mathcal M|\cdot |B_{R/4}|\stackrel{\eqref{eq:lower M}}{\gtrsim} \left(1-\frac{2d_W(x,e)+4R}{M}\right)^4 |B_M|.
\end{multline}
Theorem~\ref{thm:distortion} now follows from~\eqref{eq:use1.3} and~\eqref{eq:use doubling}.\qed

\section{Embeddings into Hilbert space}\label{sec:hilbert}

In this section, we prove Theorem~\ref{thm:ingtro discrete}. We will deduce it from an inequality on cocycles for the real Heisenberg group. We switch to the real Heisenberg group because its representation theory is simpler. However, this comes at the cost of adding a (straightforward) discretization step to the proof. The upshot is that we obtain as a byproduct a smooth Poincar\'e inequality on $\H(\R)$ of independent interest; see Theorem~\ref{Poincarethm}.

The real Heisenberg group  $\H(\R)$ is defined as the matrix group
$$\H(\R)\eqdef\left\{\left(\begin{array}{cccccc}
1 & u & w \\
0 &  1 & v\\
0 & 0 & 1
\end{array}\right):\  u,v,w\in \R\right\}.$$
The discrete Heisenberg group $\H$ sits inside $\H(\R)$ as the
cocompact discrete subgroup consisting of unipotent matrices with
integer coefficients. We equip the group $\H(\R)$ with the word
metric $d_{S_\R}$ associated with the compact symmetric generating
set $S_{\R}=\{a^u,b^v,c^w; |u|,|v|,|w|\leq 1\}$, where
$$a^u=\left(\begin{array}{cccccc}
1 & u & 0 \\
0 &  1 & 0\\
0 & 0 & 1
\end{array}\right), \; b^v=\left(\begin{array}{cccccc}
1 & 0 & 0 \\
0 &  1 & v\\
0 & 0 & 1
\end{array}\right), \; c^w=\left(\begin{array}{cccccc}
1 & 0 & w \\
0 &  1 & 0\\
0 & 0 & 1
\end{array}\right).$$

Let $\mu$ denote a Haar measure on $\H(\R)$, which coincides with
Lebesgue measure under the natural identification of $\H(\R)$ with
$\R^3$.

 \begin{theorem}\label{cocyclethm}
For  every continuous unitary representation $\pi$ of $\H(\R)$, any
continuous cocycle $\gamma\in Z^1(\pi)$ satisfies the inequality
 \begin{equation}\label{cocycle}\int_1^{\infty}\frac{\left\|\gamma\left(c^t\right)\right\|^2}{t^2}dt
  \lesssim \int_{-1}^1  \left(\|\gamma(a^u)\|^2+\|\gamma(b^u)\|^2\right)du.\end{equation}
 \end{theorem}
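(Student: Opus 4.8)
The plan is to pass to the unitary dual of $\H(\R)$. By the Stone--von Neumann theorem the irreducible unitary representations of $\H(\R)$ are the one-dimensional characters $\chi$, which are trivial on the center $\langle c\rangle$ since $\chi([a,b])=1$, together with the Schr\"odinger representations $\pi_\hbar$ on $L^2(\R)$, $\hbar\in\R\setminus\{0\}$, for which $\pi_\hbar(c^t)=e^{i\hbar t}\,\mathrm{Id}$ while $\pi_\hbar(a^u)$ acts by translation by $u$ and $\pi_\hbar(b^v)$ by modulation. Writing $\pi$ as a direct integral of irreducibles, the cocycle $\gamma$ decomposes (fiberwise, almost everywhere) into cocycles of the constituents, and since both sides of~\eqref{cocycle} are integrals over this decomposition of the squared norms of the corresponding cocycle fields, it suffices to prove~\eqref{cocycle} for a single irreducible representation. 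This reduction uses the (standard) direct-integral decomposition of $1$-cocycles together with the fact that both sides of~\eqref{cocycle} are quadratic in $\gamma$.

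If $\chi$ is one-dimensional there is nothing to prove: on the center $\gamma$ restricts to a continuous homomorphism $\R\to\C$, so $\gamma(c^t)=t\gamma(c)$, whereas $\|\gamma(g)\|\le d_{S_\R}(g,e)\cdot\sup_{s\in S_\R}\|\gamma(s)\|$ together with $d_{S_\R}(c^t,e)\asymp\sqrt{|t|}$ forces $\gamma(c)=0$, so the left-hand side of~\eqref{cocycle} vanishes. The heart of the matter is a Schr\"odinger representation $\pi_\hbar$, and the crucial observation is that \emph{every} continuous $\gamma\in Z^1(\pi_\hbar)$ is a coboundary. Indeed, on the center $t\mapsto\gamma(c^t)$ is automatically $C^1$ (average the cocycle identity against an indicator), and solving the linear ODE $\frac{d}{dt}\gamma(c^t)=i\hbar\,\gamma(c^t)+\gamma'(0)$ --- here $\hbar\ne0$ is used --- gives $\gamma(c^t)=(e^{i\hbar t}-1)\eta=\pi_\hbar(c^t)\eta-\eta$ for some $\eta\in L^2(\R)$. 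Then $\gamma-(\pi_\hbar(\cdot)\eta-\eta)$ is a cocycle vanishing on $\langle c\rangle$, and by centrality every one of its values is fixed by $\pi_\hbar(c^t)=e^{i\hbar t}$ for all $t$, hence is zero. Thus $\gamma(g)=\pi_\hbar(g)\eta-\eta$ for all $g$, and~\eqref{cocycle} becomes an inequality about the single vector $\eta$.

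Since $\|\gamma(c^t)\|_2^2=(2-2\cos\hbar t)\|\eta\|_2^2$ and $\int_1^\infty\frac{2-2\cos\hbar t}{t^2}\,dt\asymp\min(|\hbar|,1)$ (substitute $s=|\hbar|t$ and use $\int_0^\infty\frac{1-\cos s}{s^2}\,ds=\frac{\pi}{2}$), the left-hand side of~\eqref{cocycle} is $\asymp\min(|\hbar|,1)\|\eta\|_2^2$. For the right-hand side, Plancherel gives $\|\pi_\hbar(a^u)\eta-\eta\|_2^2=\int_\R(2-2\cos u\xi)|\widehat\eta(\xi)|^2\,d\xi$ and $\|\pi_\hbar(b^u)\eta-\eta\|_2^2=\int_\R(2-2\cos\hbar ux)|\eta(x)|^2\,dx$, so using $\int_{-1}^1(2-2\cos us)\,du=4\bigl(1-\tfrac{\sin s}{s}\bigr)\asymp\min(s^2,1)$ the right-hand side of~\eqref{cocycle} is comparable to
\[
\int_\R\min(\xi^2,1)\,|\widehat\eta(\xi)|^2\,d\xi\ +\ \int_\R\min(\hbar^2x^2,1)\,|\eta(x)|^2\,dx .
\]
Hence~\eqref{cocycle} reduces to the scale-uniform uncertainty estimate
\[
\min(|\hbar|,1)\,\|\eta\|_2^2\ \lesssim\ \int_\R\min(\xi^2,1)\,|\widehat\eta(\xi)|^2\,d\xi\ +\ \int_\R\min(\hbar^2x^2,1)\,|\eta(x)|^2\,dx,\qquad \eta\in L^2(\R),\ \hbar\ne0 .
\]

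I expect this last inequality to be the main obstacle. The mechanism is the Heisenberg uncertainty principle: if the first term is at most $\e\|\eta\|_2^2$ then $\widehat\eta$ is essentially concentrated in $\{|\xi|\lesssim\sqrt\e\}$, so $\eta$ is spread out on spatial scale $\gtrsim\e^{-1/2}$, which forces the second term to be $\gtrsim\min(\hbar^2\e^{-1},1)$; optimizing over $\e$ --- equivalently, invoking a truncated-variance form of the uncertainty inequality together with the arithmetic--geometric mean inequality applied to the two contributions $\e$ and $\hbar^2\e^{-1}$ --- yields the claimed lower bound. Making the phrase ``essentially concentrated'' quantitative, uniformly in the scale $\hbar$, is the one genuinely substantive point; the remaining ingredients, namely the direct-integral reduction and the one-dimensional integral estimates, are routine.
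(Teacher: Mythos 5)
Your reductions are sound and run parallel to the paper's own: decompose $\pi$ into irreducibles (both sides of \eqref{cocycle} are quadratic in $\gamma$, so it suffices to treat each fiber), kill the cocycle on the center for representations factoring through the center, and for a Schr\"odinger representation $\pi_\lambda$, $\lambda\neq 0$, reduce to a coboundary $\gamma=\pi_\lambda(\cdot)\eta-\eta$. Your treatment of that last step is in fact a little stronger than the paper's: you prove $H^1(\H(\R),\pi_\lambda)=0$ outright (and the centrality identity $(\pi_\lambda(c^t)-1)\gamma(g)=\pi_\lambda(g)\gamma(c^t)-\gamma(c^t)$, with $\pi_\lambda(c^t)$ a nontrivial scalar, gives this in one line, without your ODE regularization), whereas the paper only invokes Guichardet's theorem that cocycles are uniform-on-compacta limits of coboundaries. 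Your computation of the two sides for a coboundary is correct and matches the paper's \eqref{eq:w} and \eqref{eq:ab grad} after Plancherel.

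However, the argument stops exactly where the theorem's content begins. The scale-uniform estimate
\[
\min\{|\lambda|,1\}\,\|\eta\|_2^2\ \lesssim\ \int_\R\min\{\xi^2,1\}\,|\hat\eta(\xi)|^2\,d\xi\ +\ \int_\R\min\{\lambda^2x^2,1\}\,|\eta(x)|^2\,dx
\]
is not routine; it is the heart of Theorem~\ref{cocyclethm}, and you leave it as a heuristic that you yourself flag as unproven. As sketched it has a real hole: Chebyshev does give that most of the mass of $\hat\eta$ lies in $\{|\xi|\lesssim\sqrt{\varepsilon}\}$, but passing from this to ``$\eta$ has small mass on an interval of length $\ll\varepsilon^{-1/2}$'', uniformly in $\lambda$ and with no a priori regularity of $\eta$, requires a quantitative uncertainty principle (e.g.\ a Hilbert--Schmidt bound for $\1_E\,\mathcal F^{-1}\1_{[-L,L]}\mathcal F$, or an Amrein--Berthier/Nazarov type inequality), none of which you supply or even identify. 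The paper fills precisely this gap by an elementary real-variable argument that avoids the Fourier transform altogether: for any shift $u$, covering $\R$ by the three sets $\R\setminus[-|u|,|u|]$ and $\pm u+(\R\setminus[-|u|,|u|])$ yields \eqref{eq:throw interval}, i.e.
\[
1\ \lesssim\ \int_\R\left(|h(x+u)-h(x)|^2+|h(x)-h(x-u)|^2\right)dx\ +\ \frac{1}{\min\{\lambda^2u^2,1\}}\int_\R|h(x)|^2\min\{\lambda^2x^2,1\}\,dx,
\]
and one then takes $u=kv$ with $k=\lceil 1/\sqrt{|\lambda|}\rceil$ and $|v|\in[1/2,1]$, telescopes the shift by $kv$ into $k$ unit-scale shifts (Cauchy--Schwarz costs a factor $k$, and $\min\{|\lambda|,1\}k^2\lesssim1$ absorbs it), and integrates over $v$. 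Until you prove your displayed uncertainty inequality at this level of uniformity, or replace it by an argument of this kind, the proposal does not establish the theorem.
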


Section~\ref{sec:irreducible} is devoted to the proof of Theorem
\ref{cocyclethm}. It is clearly enough to check inequality
(\ref{cocycle}) when the representation $\pi$ is irreducible. The
proof  therefore boils down to a quantitative study of $1$-cocycles
with values in an irreducible representation of $\H(\R)$. In the next three subsections we deduce  Theorem
\ref{thm:ingtro discrete} from Theorem~\ref{cocyclethm} by a succession of reductions. Finally, in the last subsection, we state a smooth Poincaré inequality on the real Heisenberg group, whose proof, being very similar to the discrete one, is explained in a few sentences.

\subsection{Proof of Theorem
\ref{cocyclethm}}\label{sec:irreducible}

By the Stone-von Neumann theorem (see for
example~\cite[Ch.~2]{Fol89}), irreducible representations of
$\H(\R)$ are of two types: those that factor through the center,
and, for every $\lambda\in \R\setminus\{0\}$, the representation
$\pi_{\lambda}$ on $L_2(\R)$ satisfying
\begin{equation}\label{eq:def pi lambda} \forall\, h\in
L_2(\R),\quad \pi_{\lambda}(a^ub^vc^w)(h)(x)\eqdef e^{2\pi i\lambda
v x}h(x+u)e^{2\pi i\lambda w}.
\end{equation}

Note that if a nontrivial irreducible representation $\pi$ factors through the
center then
any
$1$-cocycle $\gamma\in Z^1(\pi)$ must vanish on the center.   Indeed,
$\gamma(c^w)$ is invariant under $\pi(\H)$ for every $w\in \R$, which, since the
representation is supposed to be irreducible and nontrivial,
implies that $\gamma(c^w)=0$. Therefore, in proving
Theorem~\ref{cocyclethm} we may assume that $\pi=\pi_{\lambda}$ for
some $\lambda\neq 0$.



By~\cite[Thm.~7]{Gui72} all $1$-cocycles $\gamma\in
Z^1(\pi_\lambda)$ can be approximated uniformly on compact sets by
coboundaries. Hence, it is enough to consider the case where
$\gamma$ is of the form $\gamma(x)=\pi_{\lambda}(x)h-h$, for some
$h\in L_2(\R)$. We may assume that $\|h\|=1$. By the
definition~\eqref{eq:def pi lambda}, for every $w\in \R$ we have
$\|\gamma(c^w)\|^2=4\sin^2(\pi\lambda w),$ from which we deduce that
\begin{equation}\label{eq:w}
\int_1^{\infty}\frac{\|\gamma(c^w)\|^2}{w^2}dw \lesssim
\int_1^{\infty}\frac{\sin^2(\pi\lambda
w)}{w^2}dw=|\lambda|\int_{|\lambda|}^\infty \frac{\sin^2(\pi
w)}{w^2}dw\lesssim \min\{|\lambda|,1\}.
\end{equation}
Also,
\begin{multline}\label{eq:ab grad}
\int_{-1}^1
\left(\|\gamma(a^u)\|^2+\|\gamma(b^u)\|^2\right)du=\int_{-1}^1
\int_\R\left(|h(x+u)-h(x)|^2+2|h(x)|^2\left(1-\cos(2\pi\lambda
ux)\right)\right)dxdu\\
\asymp\int_{-1}^1 \int_\R|h(x+u)-h(x)|^2dxdu+
\int_\R|h(x)|^2\min\{\lambda^2x^2,1\}dx.
\end{multline}


Since  $\R=(\R\setminus [-|u|,|u|])\cup (u+(\R\setminus
[-|u|,|u|]))\cup (-u+(\R\setminus [-|u|,|u|]))$ for every $u\in \R$,
we can bound $\|h\|^2=1$ from above as follows.
\begin{eqnarray}\label{eq:throw interval}
1&\le& \int_{\R\setminus[-|u|,|u|]}
\left(|h(x+u)|^2+|h(x-u)|^2+|h(x)|^2\right)dx\nonumber\\&\lesssim&\nonumber
\int_\R\left(\left|h(x+u)-h(x)\right|^2+\left|h(x)-h(x-u)\right|^2\right)dx+
\int_{\R\setminus[-|u|,|u|]}|h(x)|^2dx\\&\le&
\int_\R\left(\left|h(x+u)-h(x)\right|^2+\left|h(x)-h(x-u)\right|^2\right)dx\nonumber
\\&&\quad+
\frac{1}{\min\{\lambda^2u^2,1\}}\int_{\R}|h(x)|^2\min\{\lambda^2x^2,1\}dx.
\end{eqnarray}
Write $k=\left\lceil 1/\sqrt{|\lambda|}\right\rceil$. By
applying~\eqref{eq:throw interval} with $u=kv$, and integrating over
$v\in [-1,-1/2]\cup[1/2,1]$, we see that
\begin{eqnarray*}
\int_1^{\infty}\frac{\|\gamma(c^w)\|^2}{w^2}dw
&\stackrel{\eqref{eq:w}}{\lesssim}&
\min\{|\lambda|,1\}\\&\stackrel{\eqref{eq:throw
interval}}{\lesssim}& \min\{|\lambda|,1\}\int_{-1}^1 \int_\R|h(x+k
v)-h(x)|^2dxdv\\&&\quad+\frac{\min\{|\lambda|,1\}}
{\min\left\{\lambda^2\left\lceil 1/\sqrt{|\lambda|}\right\rceil^2,1\right\}}\int_\R|h(x)|^2
\min\{\lambda^2x^2,1\}dx\\
&\lesssim& \min\{|\lambda|,1\}k\sum_{j=1}^k\int_{-1}^1 \int_\R|h(x+j
v)-h(x+(j-1)
v)|^2dxdv\\&&\quad+\int_\R|h(x)|^2\min\{\lambda^2x^2,1\}dx\\
&=& \min\{|\lambda|,1\}k^2 \int_{-1}^1
\int_\R|h(x+u)-h(x)|^2dxdu+\int_\R|h(x)|^2\min\{\lambda^2x^2,1\}dx\\
&\stackrel{\eqref{eq:ab grad}}{\lesssim}& \int_{-1}^1
\left(\|\gamma(a^u)\|^2+\|\gamma(b^u)\|^2\right)du.
\end{eqnarray*}
The proof of Theorem~\ref{cocyclethm} is complete.\qed

\subsection{Reduction to finitely supported functions}
\begin{claim}
Inequality (\ref{eq:poin intro}) is a consequence of the following statement.
For every finitely supported $\phi:\H\to L_2$,  we have
\begin{equation}\label{eq:poin rest}
\sum_{x\in
\H}\sum_{k=1}^{\infty}\frac{\left\|\phi(xc^k)-\phi(x)\right\|_2^2}{k^2}\lesssim
\sum_{x\in \H}
\left(\|\phi(xa)-\phi(x)\|^2_2+\|\phi(xb)-\phi(x)\|^2_2\right).
\end{equation}
\end{claim}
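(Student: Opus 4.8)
The plan is the standard cutoff reduction to finitely supported functions, with the resulting error term absorbed by the local $L_2$-Poincar\'e inequality on $\H$. Since both sides of~\eqref{eq:poin intro} are unchanged when $f$ is replaced by $f-v$ for a fixed $v\in L_2$, we may normalize $f$ to have mean zero on $B_{11R}$, i.e. $\sum_{x\in B_{11R}}f(x)=0$. Fix a cutoff $\chi:\H\to[0,1]$ with $\chi\equiv 1$ on $B_{9R}$, supported in $B_{11R}$, and $|\chi(xs)-\chi(x)|\lesssim 1/R$ for all $x\in\H$, $s\in S$ (for instance $\chi(x)=\max\{0,\min\{1,(11R-d_W(x,e))/(2R)\}\}$), and set $\phi\eqdef\chi f$, which is finitely supported, so that~\eqref{eq:poin rest} applies to it.

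For the left-hand side, note that $d_W(c^k,e)\le 8\sqrt k$ for every $k\in\N$; hence if $x\in B_R$ and $1\le k\le R^2$ then $x$ and $xc^k$ both lie in $B_{9R}$, where $\chi\equiv 1$, so $\phi(xc^k)-\phi(x)=f(xc^k)-f(x)$. Discarding the nonnegative terms of the left side of~\eqref{eq:poin rest} with $x\notin B_R$ or $k>R^2$ shows it dominates the left side of~\eqref{eq:poin intro}. For the right-hand side, for $s\in\{a,b\}$ write
\[\phi(xs)-\phi(x)=\chi(xs)\bigl(f(xs)-f(x)\bigr)+\bigl(\chi(xs)-\chi(x)\bigr)f(x),\]
so $\|\phi(xs)-\phi(x)\|_2^2\le 2\chi(xs)^2\|f(xs)-f(x)\|_2^2+2(\chi(xs)-\chi(x))^2\|f(x)\|_2^2$. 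Summed over $x\in\H$ and $s\in\{a,b\}$, the first term is at most twice the right side of~\eqref{eq:poin intro} (if $\chi(xs)\ne0$ then $x\in B_{11R}$), and the second is $\lesssim\frac1{R^2}\sum_{x\in B_{11R}}\|f(x)\|_2^2$ (if $\chi(xs)\ne\chi(x)$ then $x\in B_{11R}$ and $(\chi(xs)-\chi(x))^2\lesssim1/R^2$).

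It remains to absorb $\frac1{R^2}\sum_{x\in B_{11R}}\|f(x)\|_2^2$ into the right side of~\eqref{eq:poin intro}; this is where the normalization is used. Since $f$ has mean zero on $B_{11R}$, the desired bound $\frac1{R^2}\sum_{x\in B_{11R}}\|f(x)\|_2^2\lesssim\sum_{x\in B_{22R}}\sum_{s\in S}\|f(xs)-f(x)\|_2^2$ is precisely the $L_2$-Poincar\'e inequality on the ball $B_{11R}$, with the gradient summed over the doubled ball $B_{22R}$ (this is why the constant $22$ appears in~\eqref{eq:poin intro}). I would invoke this as a standard fact --- it holds for every group of polynomial growth --- or prove it directly by joining two points of $B_{11R}$ by a geodesic, applying the Cauchy--Schwarz inequality along it, and summing, the doubling property of $\H$ being used to ensure that no edge is traversed by too many of the chosen geodesics.

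I expect this last step to be the only real point of substance. The naive geodesic bound on $\sum_{x\in B_{11R}}\|f(x)-f_{B_{11R}}\|_2^2$ loses a factor of $|B_{11R}|\asymp R^4$ and is useless, so one genuinely needs the sharp $R^2$ scaling of the ball Poincar\'e inequality --- equivalently, the fact that geodesics in $\H$ genuinely spread out. The rest is bookkeeping with the cutoff and keeping track of which balls the various sums are supported on.
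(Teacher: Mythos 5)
Your argument is essentially the paper's: multiply $f$ by a $1/R$-Lipschitz cutoff equal to $1$ on a ball large enough to contain $xc^k$ for all $x\in B_R$, $1\le k\le R^2$, apply the finitely-supported inequality to the truncation, and absorb the resulting error term $\frac{1}{R^2}\sum\|f\|_2^2$ (after normalizing $f$ to have zero mean) via the discrete local Poincar\'e inequality on $\H$ --- which is exactly what the paper does, citing Kleiner's Theorem 2.2 as the discrete version of Jerison's inequality, and your constants differ only by harmless bookkeeping. Your closing worry is the one inaccuracy: for a Cayley graph of a doubling group the standard Coulhon--Saloff-Coste argument, which moves from $x$ to $xg$ along a left-translated geodesic word for $g$ and uses injectivity of $x\mapsto x s_1\cdots s_{i-1}$ for each fixed prefix, already gives the sharp $R^2$ scaling with constant $|B_{2R}|/|B_R|\lesssim 1$, so the ``naive'' geodesic bound does not lose a factor of $|B_{11R}|$.
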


\begin{proof} Fix $R\in \N$ and $f:\H\to L_2$. Note that since (\ref{eq:poin intro}) is not sensitive to adding a constant to the function $f$, we can assume without loss of generality that the average of $f$ over $B_{7R}$ is zero.

Define a cutoff function $\xi:\H\to [0,1]$ by
$$
\xi(x)\eqdef \left\{\begin{array}{ll}1& x\in B_{5R},\\
6-\frac{d_W(x,e)}{R}& x\in B_{6R}\setminus B_{5R},\\
0& x\in \H\setminus B_{6R},
\end{array}\right.
$$
and let $\phi \eqdef \xi f$. Then $\phi$ is supported on $B_{6R}$. Since $\xi$ is $1/R$-Lipschitz and takes values in $[0,1]$, for all  $x\in \H$ and $s\in S$,
\begin{multline}\label{eq:tildef}
\|\phi(x)-\phi(xs)\|_2^2\lesssim |\xi(x)-\xi(xs)|^2\cdot\|f(x)\|_2^2+|\xi(xs)|^2\cdot \|f(x)-f(xs)\|_2^2\\\le
\frac{1}{R^2}\|f(x)\|^2+\|f(x)-f(xs)\|_2^2.
\end{multline}
Note that if $k\in \{1,\ldots, R^2\}$ then $d_W(e,c^k)\le 4R$, and hence for $x\in B_R$ we have $xc^k\in B_{5R}$. Therefore, an application of~\eqref{eq:poin rest} to $\phi$ yields the estimate
\begin{multline}\label{eq:cutoff}
\sum_{x\in
B_R}\sum_{k=1}^{R^2}\frac{\|f(xc^k)-f(x)\|_2^2}{k^2}  \le  \sum_{x\in
\H}\sum_{k=1}^{\infty}\frac{\| \phi(xc^k)-\phi(x)\|_2^2}{k^2}
 \lesssim  \sum_{x\in \H}\max_{s\in S}\|\phi(xs)-\phi(x)\|_2^2\\
 =    \sum_{x\in B_{7R}}\max_{s\in S}\|\phi(xs)-\phi(x)\|_2^2                                                                                                        \stackrel{\eqref{eq:tildef}}{\le} \frac{1}{R^2}\sum_{x\in B_{7R}}\|f(x)\|_2^2+  \sum_{x\in B_{7R}}\max_{s\in S}\|f(xs)-f(x)\|_2^2.
\end{multline}
By~\cite[Thm.~2.2]{Kle10} (a discrete version of the classical Heisenberg local Poincar\'e inequality~\cite{Jer86}),
\begin{equation}\label{eq:use Kleiner}
\frac{1}{R^2}\sum_{x\in B_{7R}}\|f(x)\|_2^2\lesssim \sum_{x\in B_{22R}} \left(\|f(xa)-f(x)\|_2^2+\|f(xb)-f(x)\|_2^2\right),
\end{equation}
where we used the fact that the average of $f$ on $B_{7R}$ vanishes. The desired inequality~\eqref{eq:poin intro} is now a consequence of~\eqref{eq:cutoff} and~\eqref{eq:use Kleiner}.
\end{proof}
\subsection{Reduction to an inequality on the real Heisenberg group}


\begin{claim}\label{claim:discretization}
Inequality  (\ref{eq:poin rest})  is a consequence of the following statement.
For  every continuous and compactly supported function $\psi:\H(\R)\to L_2$, we have
\begin{equation}\label{eq:poin cont}
\int_{
\H(\R)}\int_{1}^{\infty}\frac{\left\|\psi(xc^t)-\psi(x)\right\|_2^2}{t^2}dtd\mu(x)\lesssim
\int_{\H(\R)}\left(\sup_{s\in
S_\R}\|\psi(xs)-\psi(x)\|^2_2\right)d\mu(x).
\end{equation}
\end{claim}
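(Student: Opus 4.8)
The plan is to prove Claim~\ref{claim:discretization} by a standard discretization argument: starting from a finitely supported $\phi:\H\to L_2$ on the discrete group, I will build a continuous, compactly supported $\psi:\H(\R)\to L_2$ by spreading $\phi$ out over a fundamental domain, apply~\eqref{eq:poin cont} to $\psi$, and then check that each side of~\eqref{eq:poin cont} is comparable to the corresponding side of~\eqref{eq:poin rest}. Concretely, fix a relatively compact fundamental domain (say a small "box" $U$ containing $e$) for the left action of $\H$ on $\H(\R)$, or more conveniently pick a smooth bump $\rho:\H(\R)\to[0,1]$ supported near $e$ with $\sum_{g\in\H}\rho(g^{-1}x)=1$ for all $x$ (a partition of unity subordinate to the cocompact lattice), and set $\psi(x)\eqdef\sum_{g\in\H}\rho(g^{-1}x)\phi(g)$. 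This $\psi$ is continuous, compactly supported (because $\phi$ is finitely supported), and agrees with $\phi$ on the lattice up to the local averaging induced by $\rho$.

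First I would estimate the right-hand side of~\eqref{eq:poin cont}: for $x$ in a fixed fundamental box $gU$ and $s\in S_\R$, the point $xs$ lies within bounded word-distance of $g$, so $\psi(xs)-\psi(x)$ is a bounded linear combination of differences $\phi(g'a^{\pm1})-\phi(g')$, $\phi(g'b^{\pm1})-\phi(g')$ (and possibly $\phi(g'c^{\pm1})-\phi(g')$) over finitely many $g'$ near $g$; since $c^{\pm1}$ is a bounded-length word in $a^{\pm1},b^{\pm1}$, the cocycle-type telescoping bounds $\|\phi(g'c^{\pm1})-\phi(g')\|$ by a sum of finitely many $a,b$-differences. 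Integrating over $x$ and using $\mu(gU)\asymp 1$, the right side of~\eqref{eq:poin cont} is $\gtrsim$ a constant times $\sum_{g\in\H}\big(\|\phi(ga)-\phi(g)\|^2+\|\phi(gb)-\phi(g)\|^2\big)$ — wait, I want the inequality in the convenient direction, so I should note that the RHS of~\eqref{eq:poin cont} is $\lesssim$ the RHS of~\eqref{eq:poin rest}. This is the routine direction and uses only the Lipschitz-type comparison between the continuous generators and the discrete ones, plus local finiteness of the partition of unity.

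The substantive direction is bounding the left-hand side of~\eqref{eq:poin rest} by the left-hand side of~\eqref{eq:poin cont}. Here I would fix $x\in gU$ and, for an integer $k$, compare $\psi(xc^t)-\psi(x)$ for $t$ ranging over $[k-\tfrac12,k+\tfrac12]$ (say) with $\phi(gc^k)-\phi(g)$. Since $c$ is central and $\rho$ is supported near $e$, one has $\psi(xc^t)=\sum_{h\in\H}\rho(h^{-1}xc^t)\phi(h)$, and for $t$ near $k$ the indices $h$ that contribute are those near $gc^k$; so $\psi(xc^t)-\psi(x)$ is within a controlled error of $\phi(gc^k)-\phi(g)$, the error again being a bounded combination of nearby generator-differences. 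Then
\[
\frac{\|\phi(gc^k)-\phi(g)\|_2^2}{k^2}\lesssim \int_{k-1/2}^{k+1/2}\frac{\|\psi(xc^t)-\psi(x)\|_2^2}{t^2}\,dt + (\text{error terms}),
\]
and integrating over $x\in gU$ (using $\mu(gU)\asymp1$ and $t\asymp k$ on this range) and summing over $g\in\H$ and $k\ge1$ recovers the left side of~\eqref{eq:poin rest}, while the error terms are absorbed into the already-controlled right-hand side of~\eqref{eq:poin rest}. Care is needed for small $k$ (say $k=1,2$): there $c^k$ is comparable to a bounded word in $a,b$, so those finitely many terms are directly dominated by the right side of~\eqref{eq:poin rest} and can be handled separately.

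The main obstacle I anticipate is purely bookkeeping: making the "$\psi$ near the lattice equals $\phi$ up to local averaging" comparison precise and uniform, i.e.\ choosing the bump $\rho$ and the fundamental box $U$ so that all the implied constants (number of lattice points within bounded distance, word-length of $c^{\pm1}$ in terms of $a,b$, the ratio $t/k$, the measure $\mu(gU)$) are genuinely universal and independent of $R$ and of $\phi$. There is also the minor subtlety that $\psi$ built from a partition of unity is only Lipschitz, not smooth, which is harmless here since~\eqref{eq:poin cont} only asks for continuity; if one wanted a smooth $\psi$ one would mollify, but that is not needed. No single step is deep — the content of the theorem lives in Theorem~\ref{cocyclethm} and its consequence~\eqref{eq:poin cont} — so the proof of this claim is a careful but routine transfer between the discrete and continuous Heisenberg groups.
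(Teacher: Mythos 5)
Your proposal is correct and follows essentially the same route as the paper: build $\psi$ from $\phi$ via a partition of unity subordinate to the lattice $\H\subseteq\H(\R)$, bound the continuous gradient term by the discrete one using the bi-Lipschitz comparison of the word metrics and the local finiteness of the partition, and recover each discrete difference $\phi(g c^k)-\phi(g)$ from the integral of $\|\psi(xc^t)-\psi(x)\|^2$ over $x$ near $g$ and $t$ near $k$, absorbing the bounded-word error terms into the right-hand side. The only differences from the paper's argument are cosmetic (choice of interval around $k$, normalization of the bump), and your flagged bookkeeping points are exactly the ones the paper handles.
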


\begin{proof} For $r>0$ let $B_r^\R\subseteq \H(\R)$ denote the ball of radius $r$ with respect to the
metric $d_{S_\R}$. Note that $\H(\R)=\bigcup_{g\in \H} gB_2^\R$. Let
$\sigma:\H(\R)\to [0,1]$ be a continuous nonnegative function, which
equals $1$ on $B_2^\R$ and $0$ outside of $B_3^\R$. Let
$\tilde{\sigma}=\sum_{g\in \H}\sigma_g$, where
$\sigma_g(x)=\sigma(g^{-1}x)$. For all $x\in \H(\R)$ we have $1\leq
\tilde{\sigma}(x)\leq C$ for some  $C\in (0,\infty)$. Writing
$\beta=\sigma/\tilde\sigma$ and  $\beta_g(x)=\beta(g^{-1}x)$, we see
that $\{\beta_g\}_{g\in \H}$ is a continuous partition of unity for
$\H(\R)$ satisfying
\begin{equation}\label{eq:grad}
\sup_{x\in \H(\R)}\sum_{g\in \H}\sup_{s\in
S_{\R}}|\beta_g(xs)-\beta_g(x)|<\infty.
\end{equation}

Throughout the ensuing argument we will use repeatedly the fact that
for every $x\in \H(\R)$ the number of elements $g\in \H$ for which
$\beta_g(x)> 0$ is bounded by a constant independent of $x$, and
that $\sum_{g\in \H}\1_{gB_2}\asymp \1_{\H(\R)}$.

Let $\phi: \H\to L_2$ be a finitely supported function on the
discrete Heisenberg group $\H$. Define a function on the real
Heisenberg group $\H(\R)$ by
$$\psi(x)=\sum_{g\in \H}\phi(g)\beta_g(x).$$
Then $\psi$ is compactly supported. We will eventually
apply~\eqref{eq:poin cont} to $\psi$, but before doing so we will
need some preparatory estimates.

The metric $d_{S_\R}$ restricted to $\H\subseteq \H(\R)$ is
bi-Lipschitz equivalent to $d_W$~\cite[Thm.~8.3.19]{BBI01}. It
follows that for all $g_0\in \H$, if $x\in g_0B_3^\R$ then the sum
$\psi(x)-\phi(g_0)=\sum_{g\in \H}\beta_g(x)(\phi(g)-\phi(g_0))$ is
supported on elements of the form $g=g_0h$, with $d_W(h,e)\leq K$,
for some universal constant $K\in \N$. Thus, using~\eqref{eq:grad}
we see that for all $x\in g_0B_2^\R$,
\begin{equation}\label{eq:precrucialright}
\sup_{s\in S_{\R}}\|\psi(xs)-\psi(x)\|_2^2\lesssim \sum_{h\in B_K} \|\phi(g_0h)-\phi(g_0)\|_2^2.
\end{equation}
Integrating (\ref{eq:precrucialright}) over $g_0B_2^\R$ gives the following inequality:
\begin{equation}\label{eq:crucialright}
\int_{g_0B_2^\R}\left(\sup_{s\in S_{\R}}\|\psi(xs)-\psi(x)\|_2^2\right)d\mu(x)\lesssim
\sum_{h\in B_K} \|\phi(g_0h)-\phi(g_0)\|_2^2.
\end{equation}
By summing~\eqref{eq:crucialright} over $g_0\in \H$ we see that
\begin{equation}\label{eq:left}
\int_{\H(\R)}\left(\sup_{s\in S_{\R}}\|
\psi(xs)-\psi(x)\|_2^2\right)d\mu(x) \lesssim \sum_{g\in \H}
\max_{s\in S}\|\phi(g)-\phi(gs)\|_2^2,
\end{equation}
where we used the bound
\begin{equation}\label{eq:use triangle K}
\sum_{g_0\in \H}\sum_{h\in B_K} \|\phi(g_0h)-\phi(g_0)\|_2^2\lesssim
\sum_{g\in \H} \max_{s\in S}\|\phi(g)-\phi(gs)\|_2^2,
\end{equation}
which follows by writing each $h\in B_K$ as a product of at most $K$
elements of $S$, and using the triangle inequality.

In order to deduce from~\eqref{eq:poin cont} a corresponding bound
on $\phi$, we need to bound $\phi$ in terms of $\psi$. To this end,
note that for all $g_0\in \H$,
$$\psi(g_0)  =  \phi(g_0)+ \sum_{g\in \H} \beta(g^{-1}g_0)(\phi(g)-\phi(g_0)) =\phi(g_0)+
\sum_{h\in B_K}\beta(h^{-1})(\phi(g_0h)-\phi(g_0)).$$
It follows
that for all $g_0,g_1\in \H$ we have
\begin{eqnarray}\label{eq:g0g1}
\left\|\phi(g_0)-\phi(g_1)\right\|_2^2\lesssim
\|\psi(g_0)-\psi(g_1)\|_2^2+ \sum_{h\in
B_K}\left(\|\phi(g_0)-\phi(g_0h)\|_2^2+\|\phi(g_1)-\phi(g_1h)\|_2^2\right).
\end{eqnarray}
If $x,y\in \H(\R)$ satisfy
$\max\{d_{S_\R}(x,g_0),d_{S_\R}(y,g_1)\}\leq 2$, then
using~\eqref{eq:precrucialright} we deduce from~\eqref{eq:g0g1} that
\begin{equation}\label{eq:crucialleft}
\left\|\phi(g_0)-\phi(g_1)\right\|_2^2\lesssim
\|\psi(x)-\psi(y)\|_2^2+ \sum_{h\in
B_K}\left(\|\phi(g_0)-\phi(g_0h)\|_2^2+\|\phi(g_1)-\phi(g_1h)\|_2^2\right).
\end{equation}
Fix $g_0\in \H$, $x\in g_0B_1^\R$, $k\in \N$ and $t\in [k,k+1]$.
Writing $g_1=g_0c^k$ and $y=xc^t$, we have $d_{S_\R}(x,g_0)\le 1$
and
$d_{S_\R}(y,g_1)=d_{S_\R}(c^{-k}g_0^{-1}xc^t,e)=d_{S_\R}(g_0^{-1}xc^{t-k},e)\le
d_{S_\R}(g_0^{-1}x,e)+d_{S_\R}(c^{t-k},e)\le 2$. We may therefore
apply~\eqref{eq:crucialleft} and deduce that for all $g_0\in \H$,
$x\in g_0B_1^\R$ and $k\in \N$,
\begin{equation}\label{eq: to integrate}
\|\phi(g_0c^k)-\phi(g_0)\|_2^2\lesssim \|\psi(xc^t)-\psi(x)\|_2^2+
\sum_{h\in
B_K}\left(\left\|\phi(g_0)-\phi(g_0h)\right\|_2^2+\|\phi(g_0c^k)-\phi(g_0c^kh)\|_2^2\right).
\end{equation}
Integrating~\eqref{eq: to integrate} over $x\in g_0B_1^\R$ and $t\in
[k,k+1]$, we see that
\begin{multline}\label{eq:for summing}
\frac{\|\phi(g_0c^k)-\phi(g_0)\|_2^2}{k^2}\lesssim
\int_{g_0B_1^\R}\int_{k}^{k+1}\frac{
\|\psi(xc^t)-\psi(x)\|_2^2}{t^2}dtd\mu(x)\\+\frac{1}{k^2}\sum_{h\in
B_K}\left(\left\|\phi(g_0)-\phi(g_0h)\right\|_2^2+\|\phi(g_0c^k)-\phi(g_0c^kh)\|_2^2\right).
\end{multline}
Since $g_0B_1^\R$ and $g_0'B_1^\R$ intersect at a set of measure
zero if $g_0\neq g_0'$, by summing~\eqref{eq:for summing} over
$g_0\in \H$ and $k\in \N$, we see that
\begin{eqnarray*}
&&\!\!\!\!\!\!\!\!\!\!\!\!\!\!\!\!\!\!\!\!\!\!\!\!\!\!\sum_{g_0\in
\H}\sum_{k=1}^\infty
\frac{\|\phi(g_0c^k)-\phi(g_0)\|_2^2}{k^2}\\&\lesssim&
\int_{\H(\R)}\int_1^\infty\frac{
\|\psi(xc^t)-\psi(x)\|_2^2}{t^2}dtd\mu(x)+\sum_{g_0\in \H}\sum_{h\in
B_K}\left\|\phi(g_0)-\phi(g_0h)\right\|_2^2\\
&\stackrel{\eqref{eq:use triangle
K}}{\lesssim}&\int_{\H(\R)}\int_1^\infty\frac{
\|\psi(xc^t)-\psi(x)\|_2^2}{t^2}dtd\mu(x)+\sum_{g\in \H} \max_{s\in
S}\|\phi(g)-\phi(gs)\|_2^2\\
&\stackrel{\eqref{eq:poin cont}}{\lesssim}&
\int_{\H(\R)}\left(\sup_{s\in
S_\R}\|\psi(xs)-\psi(x)\|^2_2\right)d\mu(x)+\sum_{g\in \H}
\max_{s\in S}\|\phi(g)-\phi(gs)\|_2^2\\
&\stackrel{\eqref{eq:left}}{\lesssim}& \sum_{g\in \H} \max_{s\in
S}\|\phi(g)-\phi(gs)\|_2^2.\qedhere
\end{eqnarray*}
\end{proof}

\subsection{Reduction to a $1$-cocycle on $\H(\R)$}

\begin{claim}\label{claim:endgame}
Inequality  (\ref{eq:poin cont})  follows from Theorem
\ref{cocyclethm}.
\end{claim}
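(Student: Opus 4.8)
The plan is to recast \eqref{eq:poin cont} as a single instance of the cocycle inequality \eqref{cocycle}, by passing from the function $\psi$ to the coboundary it determines over the right regular representation of $\H(\R)$; this is the exact analogue, in the smooth setting, of the function-to-cocycle device already used in Section~\ref{sec:deduce} (where $F(x)(z)=f^{**}(zx)-f^{**}(z)$). Concretely, let $\mathcal H\eqdef L_2\left(\H(\R),\mu;L_2\right)$ be the (separable) Hilbert space of square-integrable $L_2$-valued functions on $\H(\R)$, and let $\pi$ be the right regular representation, $\left(\pi(g)\Psi\right)(x)\eqdef\Psi(xg)$. Since $\H(\R)$ is nilpotent, hence unimodular, $\mu$ is right-invariant, so $\pi$ is unitary, and it is strongly continuous by the standard argument for regular representations. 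Given the continuous, compactly supported $\psi:\H(\R)\to L_2$ of the claim we have $\psi\in\mathcal H$, and I would set $\gamma(g)\eqdef\pi(g)\psi-\psi$, i.e.\ $\gamma(g)(x)=\psi(xg)-\psi(x)$. This is a continuous coboundary: continuity of $g\mapsto\gamma(g)$ follows from strong continuity of $\pi$ (or directly from uniform continuity of $\psi$ together with the fact that the supports of all $\gamma(g)$, for $g$ near a fixed point, lie in a common compact set), so in particular $\gamma\in Z^1(\pi)$ and the cocycle identity $\gamma(gh)=\pi(g)\gamma(h)+\gamma(g)$ holds.

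Next I would apply Theorem~\ref{cocyclethm} to this $\pi$ and $\gamma$. For every $g\in\H(\R)$,
\[
\|\gamma(g)\|_{\mathcal H}^2=\int_{\H(\R)}\left\|\psi(xg)-\psi(x)\right\|_2^2\,d\mu(x),
\]
so by Tonelli's theorem the left-hand side of \eqref{cocycle} equals
\[
\int_1^\infty\frac{\|\gamma(c^t)\|_{\mathcal H}^2}{t^2}\,dt=\int_{\H(\R)}\int_1^\infty\frac{\left\|\psi(xc^t)-\psi(x)\right\|_2^2}{t^2}\,dt\,d\mu(x),
\]
which is precisely the left-hand side of \eqref{eq:poin cont}, while the right-hand side of \eqref{cocycle} equals
\[
\int_{\H(\R)}\int_{-1}^1\left(\left\|\psi(xa^u)-\psi(x)\right\|_2^2+\left\|\psi(xb^u)-\psi(x)\right\|_2^2\right)du\,d\mu(x).
\]
Since $a^u,b^u\in S_\R$ whenever $|u|\le1$, for each fixed $x$ the inner integrand is at most $2\sup_{s\in S_\R}\|\psi(xs)-\psi(x)\|_2^2$, so this last quantity is bounded by $4\int_{\H(\R)}\bigl(\sup_{s\in S_\R}\|\psi(xs)-\psi(x)\|_2^2\bigr)\,d\mu(x)$. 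Chaining these three identities/inequalities with the estimate supplied by Theorem~\ref{cocyclethm} yields \eqref{eq:poin cont}.

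I do not expect a genuine obstacle here; the argument is essentially a change of viewpoint from functions to cocycles. The only points that need to be checked with care are that the right regular representation of $\H(\R)$ on the $L_2$-valued space $\mathcal H$ is indeed a strongly continuous unitary representation — this is exactly where unimodularity of $\H(\R)$ is used — and the (trivial) observation that Theorem~\ref{cocyclethm}, being stated for an arbitrary unitary representation of $\H(\R)$ on a Hilbert space, applies verbatim to $\mathcal H=L_2(\H(\R);L_2)$. After that, everything reduces to Tonelli's theorem and the pointwise domination coming from $a^u,b^u\in S_\R$ for $|u|\le1$.
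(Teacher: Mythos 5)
Your proof is correct, and it takes a genuinely different --- and considerably shorter --- route than the paper. The paper does not apply Theorem~\ref{cocyclethm} to the right regular representation; instead it first periodizes $\psi$ over a maximal $10r$-separated net, $\f(g)=\sum_i\psi(x_i^{-1}g)$, builds a unitary representation on (the completion of) finitely supported functions on $\H(\R)$ whose inner product is an invariant mean, i.e.\ an ultrafilter limit of normalized averages over the F\o lner balls $B_n^\R$, takes the coboundary $\gamma(x)=\delta_x-\delta_e$, and then needs covering/doubling estimates (\eqref{eq:use I}, \eqref{eq:second doubling}) to convert the averaged quantities back into the integrals appearing in~\eqref{eq:poin cont}. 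Your observation is that none of this is necessary here: since $\psi$ is continuous and compactly supported it already lies in $L_2(\H(\R),\mu;L_2)$, the right regular representation is unitary by unimodularity of the nilpotent group $\H(\R)$ and strongly continuous, and the coboundary $\gamma(g)=\pi(g)\psi-\psi$ turns the two sides of~\eqref{cocycle} into exactly the two sides of~\eqref{eq:poin cont} via Tonelli and the pointwise bound coming from $a^u,b^u\in S_\R$. This buys brevity, explicit constants, and freedom from the ultrafilter formalities (quotienting by null vectors, etc.); what the paper's heavier construction buys is a template that also works when the function in question is merely Lipschitz or bounded rather than square-integrable (as in Sections~\ref{sec:non quant} and~\ref{sec:deduce}), which is presumably why the authors reused it, but for this claim --- which has already been reduced to compactly supported $\psi$ --- your direct argument suffices. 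The one point to keep explicit is that Theorem~\ref{cocyclethm} is being invoked for the separable representation $L_2(\H(\R);L_2)$, whose direct-integral reduction to irreducibles is exactly what the paper's proof of that theorem presupposes, so nothing new is needed there.
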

\begin{proof}
Let $\psi:\H(\R)\to L_2$ be continuous and supported in $B_r^\R$ for
some $r\ge 1$. Take a maximal family
$\left\{x_iB_{10r}^\R\right\}_{i=1}^\infty$ of disjoint balls of
radius $10r$. Define $\f:\H(\R)\to L_2$ by
$\varphi(g)=\sum_{i=1}^\infty \psi(x_i^{-1}g)$. Note that since
$d_{S_\R}(c^{r^2},e)\le 4r$, for each $i\in \N$ we have
\begin{equation}\label{eq: psi to varphi left}
\int_{
\H(\R)}\int_{1}^{\infty}\frac{\left\|\psi(xc^t)-\psi(x)\right\|_2^2}{t^2}dtd\mu(x)=
\int_{x_iB_{5r}^\R}\int_{1}^{r^2}\frac{\left\|\varphi(xc^t)-\varphi(x)\right\|_2^2}{t^2}dtd\mu(x).
\end{equation}
Similarly,
\begin{equation}\label{eq: psi to varphi right}
\int_{\H(\R)}\left(\sup_{s\in
S_{\R}}\|\psi(xs)-\psi(x)\|^2_2\right)d\mu(x)=\int_{x_iB_{2r}^\R}\left(\sup_{s\in
S_{\R}}\|\varphi(xs)-\varphi(x)\|^2_2\right)d\mu(x).
\end{equation}

Let $X$ be the space of all finitely supported complex-valued
functions on $\H(\R)$. We denote by $\pi$ the action of $\H(\R)$ on
$X$ given by $\pi(x)\delta_y\eqdef\delta_{xy}$, where
$\delta_z:\H(\R)\to\C$ denotes the function which equals $1$ at
$z\in \H(\R)$ and equals $0$ elsewhere. Let $\U$ be a free
ultrafilter on $\N$. Define a scalar product $[\cdot,\cdot]$ on $X$
by:
\begin{equation}\label{eq:def scalar}
[\delta_x,\delta_y]\eqdef \lim_{n\to \U}
\frac{1}{\mu(B_n^\R)}\int_{B_n^\R} \langle\f(zx),\f(zy)\rangle
d\mu(z),
\end{equation}
where $\langle\cdot,\cdot\rangle$ denotes the scalar product on
$L_2$.

Since $\{B_n\}_{n=1}^\infty$ is a F\o lner sequence for
$\H(\R)$, a limit along a free ultrafilter of averages over $B_n$
when $n\to \infty$ is an invariant mean on $\H(\R)$. It follows that
the scalar product defined in~\eqref{eq:def scalar} is
$\pi(\H(\R))$-invariant. Thus $\pi$ is a unitary representation of
$\H(\R)$  (formally we should first pass to the completion of the quotient of
$X$ by the subspace consisting of norm-zero elements, but we will
ignore this inessential point in what follows). We note that $\pi$ is also continuous in the strong operator topology. Indeed, since $\psi$ is continuous and compactly supported, $\f$ is uniformly continuous. Thus, writing $\|f\|_X^2\eqdef[f,f]$ for  $f\in X$, we have for every $y\in \H(\R)$,
\begin{multline}\label{eq:continuous pi}
\lim_{x\to e} \|\pi(x)\delta_y-\delta_y\|_X^2=\lim_{x\to e} \lim_{n\to \U}
\frac{1}{\mu(B_n^\R)}\int_{B_n^\R} \|\f(gxy)-\f(gy)\|_2^2d\mu(g)\\\le \lim_{x\to e}\sup_{g\in \H(\R)} \|\f(gxy)-\f(gy)\|_2^2=0,
\end{multline}
implying the strong continuity of $\pi$.

Let
$\gamma:\H(\R)\to X$ be given by $\gamma(x)=\delta_x-\delta_e$. Then
$\pi\in Z^1(\pi)$ and for all $x\in X$,
\begin{equation}\label{eq:norm gamma}
\|\gamma(x)\|^2_X=\lim_{n\to \U}
\frac{1}{\mu(B_n^\R)}\int_{B_n^\R} \|\f(gx)-\f(g)\|_2^2d\mu(g).
\end{equation}
Arguing as in~\eqref{eq:continuous pi}, the uniform continuity of
$\f$ and~\eqref{eq:norm gamma} imply that $\gamma$ is a continuous
$1$-cocycle.

Fix  $n>100r$ large enough so as to ensure that we have $\mu(B_n\setminus
B_{n-100r})\le \mu(B_n)/2$. Define $I=\{i\in \N:\  x_iB_{5r}^\R\subseteq
B_n\}$ and write $\Omega=\bigcup_{i\in I} x_iB_{5r}^\R$ and
$\Omega'= \bigcup_{i\in I} x_iB_{10r}^\R$. By the maximality of
$\left\{x_iB_{10r}^\R\right\}_{i=1}^\infty$ we have
$\Omega'\supseteq B_{n-100r}$. Hence $\mu(\Omega')\ge \mu(B_n)/2$.
Since $\H(\R)$ is doubling, $\mu(\Omega')\lesssim \mu(\Omega)$, and
therefore $|I|\mu(B_{5r}^\R)\ge \mu(\Omega)\gtrsim \mu(B_n)$.
Note that if $g\in
x_iB_{5r}^\R$ for some $i\in \N$ then for every $t\in [1,r^2]$ we
have $gc^t\in x_iB_{10r}$. Hence,
\begin{multline}\label{eq:use I}
\frac{1}{\mu(B_n^\R)}\int_{B_n^\R}\|\f(gc^t)-\f(g)\|_2^2d\mu(g)\ge \frac{1}{\mu(B_n^\R)} \sum_{i\in I} \int_{x_iB_{5r}^\R} \|\f(gc^t)-\f(g)\|_2^2d\mu(g)\\
= \frac{|I|}{\mu(B_n^\R)}\int_{B_{5r}^\R}\|\psi(xc^t)-\psi(x)\|_2^2d\mu(x)\gtrsim \frac{1}{\mu(B_{5r}^\R)}\int_{B_{5r}^\R}\|\psi(xc^t)-\psi(x)\|_2^2d\mu(x).
\end{multline}
It follows from~\eqref{eq:norm gamma} and~\eqref{eq:use I} that
\begin{multline}\label{eq:to estimate invariant mean}
\frac{1}{\mu(B_{5r}^\R)}\int_{B_{5r}^\R}\int_1^{r^2}\frac{\|\psi(xc^t)-\psi(x)\|_2^2}{t^2}d\mu(x)\lesssim \int_1^{r^2} \frac{\|\gamma(c^t)\|_X^2}{t^2}dt\stackrel{\eqref{cocycle}}{\lesssim}\sup_{s\in S_\R} \|\gamma(s)\|_X^2 \\
\stackrel{\eqref{eq:norm gamma}}{=}\sup_{s\in S_\R} \lim_{n\to \U}
\frac{1}{\mu(B_n^\R)}\int_{B_n^\R} \|\f(gs)-\f(g)\|_2^2d\mu(g).
\end{multline}
Let $J\subseteq \N$ denote the set of $i\in \N$ such that $B_n\cap x_iB_{2r}\neq\emptyset$. Then $|J|\mu(B_{2r}^\R)\le \mu(B_{n+4r}^\R)$. It follows that for all $s\in S_\R$,
\begin{multline}\label{eq:second doubling}
\int_{B_n^\R} \|\f(gs)-\f(g)\|_2^2d\mu(g)\le  \sum_{i\in J} \int_{x_iB_{2r}}\|\f(gs)-\f(g)\|_2^2d\mu(g)\\=|J| \int_{B_{2r}} \|\psi(xs)-\psi(x)\|_2^2d\mu(x)\le \frac{\mu(B_{n+4r}^\R)}{\mu(B_{2r}^\R)} \int_{B_{2r}} \|\psi(xs)-\psi(x)\|_2^2d\mu(x).
\end{multline}
Substituting~\eqref{eq:second doubling} into~\eqref{eq:to estimate invariant mean} we conclude that
\begin{eqnarray*}
&&\!\!\!\!\!\!\!\!\!\!\!\!\!\!\!\!\!\!\!\!\!\!\!\!\!\!\!\!\!\!\!\!\!\!\!\!\!\!\!\int_{
\H(\R)}\int_{1}^{\infty}\frac{\left\|\psi(xc^t)-\psi(x)\right\|_2^2}{t^2}dtd\mu(x)\stackrel{\eqref{eq: psi to varphi left}}{=}\int_{B_{5r}^\R}\int_1^{r^2}\frac{\|\psi(xc^t)-\psi(x)\|_2^2}{t^2}d\mu(x)\\&\stackrel{\eqref{eq:to estimate invariant mean}\wedge \eqref{eq:second doubling}}{\lesssim}& \frac{\mu(B_{5r}^\R)}{\mu(B_{2r}^\R)}\sup_{s\in S_\R} \lim_{n\to \U}\frac{\mu(B_{n+4r}^\R)}{\mu(B_n^\R)} \int_{B_{2r}} \|\psi(xs)-\psi(x)\|_2^2d\mu(x)\\&\stackrel{\eqref{eq: psi to varphi right}}{\lesssim} &\int_{\H(\R)}\left(\sup_{s\in
S_{\R}}\|\psi(xs)-\psi(x)\|^2_2\right)d\mu(x)
\end{eqnarray*}
 This completes the proof of Claim~\ref{claim:endgame}, and therefore also the proof of Theorem~\ref{thm:ingtro discrete}.
\end{proof}

\subsection{A smooth Poincar\'e inequality on $\H(\R)$} \label{smoothsection}

 Equip $\H(\R)$ with the left-invariant Riemannian metric given
by $du^2+dv^2+(dw-udv)^2$. In what follows, given a smooth function
$f:\H(\R)\to \R$ we let $\nabla_\H f$ denote its gradient with
respect to this Riemannian structure.

\begin{theorem}\label{Poincarethm}
For every smooth function $f:\H(\R)\to \R$, and all $R>0$,
 \begin{equation*}\label{smoothPoincare}\int_{B_R^\R} \int_1^{R^2}\frac{\left|f(xc^t)-f(x)\right|^2}{t^2}dtd\mu(x)
 \lesssim \int_{B_{CR}^\R}\left|\nabla_\H f(x)\right|^2 d\mu(x),
 \end{equation*}
 where $C>0$ is a universal constant.
 \end{theorem}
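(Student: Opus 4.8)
The plan is to follow, in the smooth continuous category, the strategy behind Theorem~\ref{thm:ingtro discrete}: a cutoff reduction to compactly supported functions (using the smooth local Poincar\'e inequality on $\H(\R)$ in place of the discrete one quoted in~\eqref{eq:use Kleiner}), followed by the periodization-to-a-cocycle argument of Claim~\ref{claim:endgame} and an appeal to Theorem~\ref{cocyclethm}. Concretely: since the left-hand side of the asserted inequality is unchanged when a constant is added to $f$, I may assume $f$ has vanishing $\mu$-average over a ball $B_{cR}^\R$ for a suitable universal constant $c$. Choosing a smooth $\xi:\H(\R)\to[0,1]$ equal to $1$ on a ball of radius $\asymp R$ containing $B_R^\R\cdot\{c^t:1\le t\le R^2\}$ (note $d_{S_\R}(c^t,e)\lesssim R$ for such $t$), supported in a slightly larger ball, with $|\nabla_\H\xi|\lesssim 1/R$, and setting $\psi=\xi f$, one has $|\nabla_\H\psi|^2\lesssim R^{-2}|f|^2\1_{B_{cR}^\R}+|\nabla_\H f|^2$, and $\psi(xc^t)-\psi(x)=f(xc^t)-f(x)$ for all $x\in B_R^\R$ and $1\le t\le R^2$. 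It therefore suffices to prove the following statement, call it $(\star)$: for every smooth $\psi:\H(\R)\to\R$ supported in some ball $B_r^\R$ with $r\ge 1$,
\[
\int_{B_{5r}^\R}\int_1^{r^2}\frac{\left|\psi(xc^t)-\psi(x)\right|^2}{t^2}\,dt\,d\mu(x)\ \lesssim\ \int_{\H(\R)}\left|\nabla_\H\psi(x)\right|^2\,d\mu(x),
\]
since then the theorem follows by applying $(\star)$ with $r\asymp R$ and using Jerison's smooth local Poincar\'e inequality on $\H(\R)$~\cite{Jer86} to bound $R^{-2}\int_{B_{cR}^\R}|f|^2\,d\mu$ by $\int_{B_{CR}^\R}|\nabla_\H f|^2\,d\mu$.

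To prove $(\star)$ I would rerun the proof of Claim~\ref{claim:endgame} almost verbatim. Given $\psi$ supported in $B_r^\R$, pick a maximal family $\{x_iB_{10r}^\R\}_i$ of pairwise disjoint balls, form the periodization $\f=\sum_i\psi(x_i^{-1}\cdot)$, fix a free ultrafilter $\U$ on $\N$, and endow the space of finitely supported functions on $\H(\R)$ with the F\o lner-averaged inner product $[\delta_x,\delta_y]=\lim_{n\to\U}\mu(B_n^\R)^{-1}\int_{B_n^\R}\f(zx)\f(zy)\,d\mu(z)$; this yields a continuous unitary representation $\pi$ of $\H(\R)$ and the continuous $1$-cocycle $\gamma(x)=\delta_x-\delta_e$. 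Exactly as in~\eqref{eq:use I}--\eqref{eq:to estimate invariant mean}, the disjointness of the balls together with the doubling of $\mu$ gives $\int_1^{r^2}\|\gamma(c^t)\|^2t^{-2}\,dt\gtrsim\mu(B_{5r}^\R)^{-1}\int_{B_{5r}^\R}\int_1^{r^2}|\psi(xc^t)-\psi(x)|^2t^{-2}\,dt\,d\mu(x)$, i.e.\ the left-hand side of $(\star)$ up to the normalization $\mu(B_{5r}^\R)^{-1}$; and Theorem~\ref{cocyclethm} bounds $\int_1^\infty\|\gamma(c^t)\|^2t^{-2}\,dt$ by $\int_{-1}^1\big(\|\gamma(a^u)\|^2+\|\gamma(b^u)\|^2\big)\,du$.

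The one genuinely new point is to identify this last quantity with the Dirichlet energy. Let $X=\partial_u$ and $Y=\partial_v+u\partial_w$ be the left-invariant vector fields generating the one-parameter groups $u\mapsto a^u$ and $u\mapsto b^u$; since $\{X,Y,\partial_w\}$ is an orthonormal frame for the metric $du^2+dv^2+(dw-u\,dv)^2$, we have $|X\psi|^2+|Y\psi|^2\le|\nabla_\H\psi|^2$. By the fundamental theorem of calculus and Cauchy--Schwarz, $|\f(ga^u)-\f(g)|^2\le|u|\int_0^{|u|}|(X\f)(ga^{\pm s})|^2\,ds$, hence $\int_{-1}^1|\f(ga^u)-\f(g)|^2\,du\lesssim\int_{-1}^1|(X\f)(ga^s)|^2\,ds$. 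Averaging $\mu(B_n^\R)^{-1}\int_{B_n^\R}(\cdot)\,d\mu(g)$, using the unimodularity of $\H(\R)$ to translate the inner integral by $a^{\pm s}$ (and $\mu(B_{n+1}^\R)/\mu(B_n^\R)\to 1$), and then the disjointness of the supports of the summands of $\f$ to pass back to $\psi$, one obtains $\int_{-1}^1\|\gamma(a^u)\|^2\,du\lesssim\mu(B_r^\R)^{-1}\int_{B_{2r}^\R}|X\psi|^2\,d\mu$, and symmetrically with $(a,X)$ replaced by $(b,Y)$. Combining with the previous paragraph and Theorem~\ref{cocyclethm}, and cancelling the normalizations $\mu(B_{5r}^\R)^{-1}$ and $\mu(B_r^\R)^{-1}$ (comparable by doubling), proves $(\star)$, and hence Theorem~\ref{Poincarethm}.

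Most of this is a transcription of Claim~\ref{claim:endgame}, and the cutoff step is routine once Jerison's inequality is invoked. The step I expect to require the most care --- and the only computation with no counterpart in the discrete proof --- is the identification of the right-hand side of Theorem~\ref{cocyclethm} with $\int|\nabla_\H\psi|^2$: one must pass from the word-metric increments $\gamma(a^u),\gamma(b^u)$ to the Riemannian gradient and, at the same time, keep the volume normalizations produced by the F\o lner averaging sufficiently under control that the doubling of $\mu$ makes them cancel.
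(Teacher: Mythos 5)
Your proposal is correct and follows essentially the same route as the paper, which itself only sketches the argument: drop the discretization step, do the cutoff reduction with Jerison's smooth local Poincar\'e inequality in place of Kleiner's discrete one, then run the periodization/F\o lner--ultrafilter cocycle construction of Claim~\ref{claim:endgame} and invoke Theorem~\ref{cocyclethm}. The one step you single out as new --- converting $\int_{-1}^1(\|\gamma(a^u)\|^2+\|\gamma(b^u)\|^2)\,du$ into $\int|\nabla_\H\psi|^2$ by the fundamental theorem of calculus along the left-invariant fields $X=\partial_u$, $Y=\partial_v+u\partial_w$ together with unimodularity and doubling --- is exactly the detail the paper leaves implicit, and your treatment of it is sound.
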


The proof of this Poincar\'e inequality can be obtained from Theorem
\ref{cocyclethm} in a way similar, and actually even shorter than
its discrete counterpart. Indeed, the discretization step of
Claim~\ref{claim:discretization} is not needed here. The other
difference lies in the first step, where instead of the discrete
Poincar\'e inequality~\eqref{eq:use Kleiner}, we use the following
smooth version, which is due to \cite{Jer86}. For all $R>0$, and for
all smooth functions $f:\H(\R)\to \R$ whose integral over $B_R^\R$ is
zero,
\begin{equation*}
\frac{1}{R^2}\int_{B_{R}^\R} |f(g)|^2d\mu(g)\lesssim
\int_{B_{cR}^\R}|\nabla_\H f(g)|^2d\mu(g),
\end{equation*}
where $c>0$ is a universal constant.

\bibliographystyle{abbrv}
\bibliography{heis}

\end{document}